\providecommand{\tabularnewline}{\\}
\theoremstyle{plain}
\newtheorem{thm}{\protect\theoremname}
\newtheorem{defn}[thm]{\protect\definitionname}
\newtheorem{prop}[thm]{\protect\propositionname}
\newtheorem{rem}[thm]{\protect\remarkname}
\newtheorem{lem}[thm]{\protect\lemmaname}
\newenvironment{proof}[1][\protect\proofname]{\par
\normalfont\topsep6\p@\@plus6\p@\relax
\trivlist
\itemindent\parindent
\item[\hskip\labelsep\scshape #1]\ignorespaces
}{%
\endtrivlist\@endpefalse
}
\providecommand{\proofname}{Proof}
\newtheorem{example}[thm]{\protect\examplename}
\newtheorem{cor}[thm]{\protect\corollaryname}
\newtheorem{example}[theorem]{Example}
\newtheorem{rem}[theorem]{Remark}
\newtheorem*{problem*}{Problem}
\numberwithin{equation}{section}
\numberwithin{thm}{section}
\newcommand\myshade{100}
\definecolor{mylinkcolorhtml}{HTML}{0066cc}
\definecolor{mycitecolorhtml}{HTML}{cc6600}
\definecolor{myurlcolorhtml}{HTML}{0066cc}
\colorlet{mylinkcolor}{mylinkcolorhtml}
\colorlet{mycitecolor}{mycitecolorhtml}
\colorlet{myurlcolor}{myurlcolorhtml}
\newcommand{\graphscale}{0.75}
\newlength{\dhatheight}
\newcommand{\doublehat}[1]{%
    \settoheight{\dhatheight}{\ensuremath{\hat{#1}}}%
    \addtolength{\dhatheight}{-0.25ex}%
    \hat{\vphantom{\rule{1pt}{\dhatheight}}%
    \smash{\hat{#1}}}
}
\tikzset{node distance=1.5cm}
\tikzset{graph node/.style={circle, fill=gray!20}}
\tikzset{graph edge/.style={thick, >=stealth'}}
\numberwithin{figure}{subsection}
\numberwithin{table}{subsection}
\LetLtxMacro\oldhyperref\hyperref
\renewcommand{\hyperref}[2][]{\oldhyperref[#1]{\upshape #2}}
\providecommand{\corollaryname}{Corollary}
\providecommand{\definitionname}{Definition}
\providecommand{\examplename}{Example}
\providecommand{\lemmaname}{Lemma}
\providecommand{\propositionname}{Proposition}
\providecommand{\remarkname}{Remark}
\providecommand{\theoremname}{Theorem}
\begin{document}

\title{Weakly chained matrices,\\ policy iteration, and impulse control}

\date{}

\author{P. Azimzadeh\thanks{David R. Cheriton School of Computer Science, University of Waterloo, Waterloo ON, Canada N2L 3G1 {\tt \href{mailto:pazimzad@uwaterloo.ca}{pazimzad@uwaterloo.ca}} and {\tt \href{mailto:paforsyt@uwaterloo.ca}{paforsyt@uwaterloo.ca}}}
\and P. A. Forsyth\footnotemark[1]}
\maketitle
\begin{abstract}
This work is motivated by numerical solutions to \emph{Hamilton-Jacobi-Bellman
quasi-variational inequalities} (HJBQVIs) associated with \emph{combined
stochastic and impulse control} problems. In particular, we consider
(i) \emph{direct control}, (ii) \emph{penalized}, and (iii) \emph{semi-Lagrangian}
discretization schemes applied to the HJBQVI problem. Scheme (i) takes
the form of a \emph{Bellman problem} involving an operator which is
not necessarily contractive. We consider the well-posedness of the
Bellman problem and give sufficient conditions for convergence of
the corresponding policy iteration. To do so, we use \emph{weakly
chained diagonally dominant matrices}, which give a graph-theoretic
characterization of weakly diagonally dominant M-matrices. We compare
schemes (i)\textendash (iii) under the following examples: (a) optimal
control of the exchange rate, (b) optimal consumption with fixed and
proportional transaction costs, and (c) pricing guaranteed minimum
withdrawal benefits in variable annuities. We find that one should
abstain from using scheme (i).
\end{abstract}
\newcommand{\mykeywords}{Hamilton-Jacobi-Bellman equation, combined
stochastic and impulse control, policy iteration, weakly chained diagonally
dominant matrix, optimal exchange rate, optimal consumption, GMWB}\newcommand{\mysubjects}{65N06,
93E20}\smallskip \noindent \textbf{Keywords.} \mykeywords

\smallskip \noindent \textbf{AMS subject classifications.} \mysubjects

\section{Introduction}

This work is motivated by the computation of numerical solutions to
\emph{Hamilton-Jacobi-Bellman quasi-variational inequalities} (HJBQVI)
associated with combined stochastic and impulse control. These problems
are of the form:\newcommand{\myproblem}{Find a viscosity solution
(see \cite[Definition 2.2]{ishii1993viscosity}) of the HJBQVI
\begin{multline}
0=F(t,x,u,Du(t,x),D^{2}u(t,x))\\
\coloneqq-\begin{cases}
\max\left(\sup_{w\in W}\left\{ {\displaystyle \frac{\partial u}{\partial t}}+\text{\emph{\L}}^{w}u-\rho u+f^{w}\right\} ,\mathcal{M}u-u\right) & \text{on }[0,T)\times(\overline{\Omega}\setminus\Lambda)\\
\max\left(g-u,\mathcal{M}u-u\right) & \text{on }\partial^{+}\Omega
\end{cases}\label{eq:hjbqvi_pde_and_boundary}
\end{multline}
where $\Omega\subset\mathbb{R}^{d}$ is open, $\Lambda\subset\partial\Omega$,
$\partial^{+}\Omega\coloneqq([0,T)\times\Lambda)\cup(\{T\}\times\overline{\Omega})$,
\emph{$\text{\L}^{w}\coloneqq\text{\L}(t,x,w)$} is the (possibly
degenerate) generator of an SDE, $f^{w}\coloneqq f(t,x,w)$ is a forcing
term, and $\mathcal{M}$ is the impulse (a.k.a. intervention) operator
\begin{equation}
\mathcal{M}u(t,x)\coloneqq\sup_{z\in Z(t,x)}\left\{ u(t,x+\Gamma(t,x,z))+K(t,x,z)\right\} .\label{eq:hjbqvi_intervention_operator}
\end{equation}
If $Z(t,x)$ is empty at a particular point $(t,x)$, $\mathcal{M}u(t,x)$
is understood to take the value $-\infty$, corresponding to no impulses
being allowed at that point.}\begin{problem*} \myproblem \end{problem*}

We focus on implicit discretization schemes for the HJBQVI problem
that do not suffer from the usual timestep restrictions of explicit
schemes. In particular, we consider (i) \emph{direct control}, (ii)
\emph{penalized} and (iii) \emph{semi-Lagrangian} schemes. The semi-Lagrangian
scheme (used for HJBQVIs in \cite{chen2008numerical}) differs from
its counterparts in that it handles controlled terms using information
from the previous timestep. As such, computing the solution of this
scheme does not require an iterative method. However, this scheme
requires that the control $w$ in $\text{\L}^{w}$ appears only in
the coefficient of the first-order term. For the other two schemes,
an iterative method is needed. The particular iterative method analyzed
herein is Howard's\emph{ policy iteration }algorithm. Not considered
is the alternative \emph{value iteration} algorithm, due to its poor
performance as the numerical grid is refined \cite[\S6.1]{forsyth2007numerical}.
Convergence of policy iteration applied to the penalized scheme turns
out to be a trivial consequence of the strict diagonal dominance of
the input matrices to policy iteration (\prettyref{subsec:penalized}).
Convergence of policy iteration applied to the direct control scheme
is a more delicate matter, as discussed below.

The direct control scheme takes the form of the fixed point problem
\begin{equation}
\text{find }v\in\mathbb{R}^{M}\text{ such that }v=\max\left(\sup_{w\in\mathcal{W}}L(w)v+c(w),\sup_{z\in\mathcal{Z}}B(z)v+k(z)\right)\label{eq:fixed_point_problem}
\end{equation}
where $L(w)$ and $B(z)$ are contractive and nonexpansive matrices,
respectively. It is understood that the supremum and maximum are element-wise
and controls are ``row-decoupled'' (see $\S$\ref{sec:policy_iteration}).
\cite{chancelier2007policy} gives sufficient conditions for convergence
of a policy iteration to the unique solution of \eqref{eq:fixed_point_problem}.
However, convergence in \cite{chancelier2007policy} is conditional
on the choice of initial guess \cite[Theorem 2 (iii)]{chancelier2007policy}.
We remove this constraint.

More importantly, \cite{chancelier2007policy} restricts the admissible
set of controls and imposes a strong assumption on $B(z)$ (of which
assumption \ref{ass:path_to_sdd} in this work is an analogue) to
ensure convergence of policy iteration applied to \eqref{eq:fixed_point_problem}.
Unfortunately, reasonable instances of problem \eqref{eq:fixed_point_problem}
(including examples in this work) do not necessarily satisfy this
condition. We show that, under a much weaker assumption, a solution
to \eqref{eq:fixed_point_problem} is unique. Moreover, when \ref{ass:path_to_sdd}
is not satisfied directly, we provide a way to construct this solution
by considering a ``modified problem'' that satisfies \ref{ass:path_to_sdd}.
Roughly speaking, one arrives at the modified problem by removing
some suboptimal controls from the control set. However, this procedure
is ad hoc (i.e. problem dependent).

To establish the above relaxations, we use \emph{weakly chained diagonally
dominant} (WCDD) matrices. WCDD matrices give a graph-theoretic characterization
of weakly diagonally dominant M-matrices (\prettyref{thm:no_loop_matrix_characterization}).
The WCDD matrix approach to the convergence of policy iteration applied
to \eqref{eq:fixed_point_problem} is intuitive and established using
well-known results on policy iteration (\prettyref{prop:convergence_of_policy_iteration}).

The ad hoc removal of suboptimal controls makes the direct control
scheme less robust than its counterparts, for which control sets need
not be altered to ensure convergence. It is thus natural to ask if
there is an advantage to using a direct control scheme. To answer
this, we apply each scheme to the following examples:
\begin{itemize}
\item optimal control of the exchange rate;
\item optimal consumption with fixed and proportional transaction costs;
\item pricing guaranteed minimum withdrawal benefits in variable annuities.
\end{itemize}
The semi-Lagrangian scheme only requires a single linear solve per
timestep since no iterative method is needed. However, as mentioned
above, such a scheme cannot be used if the control $w$ appears in
the diffusion coefficient of $\text{\L}^{w}$ (or if the underlying
process is Lévy with controlled arrival rate). We find that the penalized
scheme performs at least as well the direct control scheme. Both produce
nearly identical results and often require roughly the same amount
of computation. In the specific case of the optimal consumption problem,
the penalized scheme even outperforms the direct control scheme, taking
only a few policy iterations to converge per timestep.

We mention that in the infinite-horizon setting ($T=\infty$), optimal
consumption with fixed and proportional transaction costs was considered
numerically in \cite{chancelier2002combined} using iterated optimal
stopping, a theoretical  tool \cite[Chapter 7, Lemma 7.1]{oksendal2005applied}
for the construction of solutions that has found its way into numerical
implementations \cite{le2010finite,bayraktar2011pricing}. Computationally,
for finite-horizon problems ($T<\infty$), iterated optimal stopping
has high space complexity \cite{babbin2014comparison}, and is thus
not considered here. Also not considered here is the simulation of
penalized backward stochastic differential equations \cite{kharroubi2010backward},
a recent alternative well-suited to high-dimensional problems.

In this work, we restrict our attention to problems of dimension three
or lower. To keep focus on the interesting aspects of impulse control,
we assume that between impulses, the underlying stochastic process
associated with the HJBQVI is a Brownian motion with drift $\mu\coloneqq\mu(t,x,w)$
and scaling $\sigma\coloneqq\sigma(t,x,w)$ (we can extend to a Lévy
process with nontrivial arrival rate by, e.g., \cite{cont2005finite}).
This allows us to write
\[
\text{\L}^{w}u(t,x)\coloneqq\frac{1}{2}\operatorname{trace}\left(\sigma(t,x,w)\sigma^{\intercal}(t,x,w)D_{x}^{2}u(t,x)\right)+\left\langle \mu(t,x,w),D_{x}u(t,x)\right\rangle .
\]

We mention here that problem \eqref{eq:fixed_point_problem} can also
be interpreted as a Bellman problem associated with an infinite-horizon
Markov decision process (MDP) with vanishing discount (\prettyref{exa:optimal_control_of_markov_chain_with_vanishing_discount}).
In fact, \eqref{eq:fixed_point_problem} is a generalization of a
reflecting boundary problem (see, e.g., the monograph of Kushner and
Dupuis \cite[pg. 39--40]{kushner1992numerical}). In the context of
MDPs, $L(w)$ and $B(z)$ capture the transition probabilities at
states with nonvanishing and vanishing discount factors, respectively.
A WCDD matrix condition guarantees the convergence of policy iteration
to the unique solution of the Bellman problem (\prettyref{cor:optimal_control_of_markov_chain_with_vanishing_discount}).
Intuitively, this condition ensures that the underlying Markov chain
arrives (with positive probability) at a state with nonvanishing discount
independent of the initial state.

We summarize some of our main findings below:
\begin{itemize}
\item Policy iteration applied to a (monotone) direct control scheme frequently
fails due to the possible singularity of the matrix iterates.
\item We establish provably convergent techniques to eliminate singularity.
However, applying these techniques is problem-dependent.
\item We show that policy iteration applied to a (monotone) penalized scheme
never fails. Numerical tests on three problems confirm that such a
scheme performs at least as well as (and sometimes better than) its
direct control counterpart.
\end{itemize}
The additional effort required to ensure the convergence in the direct
control case along with the comparable (if not better) performance
in the penalized case suggests that one should abstain from a direct
control scheme.

An outline of this work is as follows. \prettyref{sec:policy_iteration}
reminds the reader of a well-known result on the convergence of policy
iteration. \prettyref{sec:no_loop_matrices} discusses WCDD matrices.
\prettyref{sec:fixed_point_problem} gives conditions for the convergence
of policy iteration applied to problem \eqref{eq:fixed_point_problem}
and its well-posedness under weaker assumptions. A self-contained
MDP example is given therein (\prettyref{exa:optimal_control_of_markov_chain_with_vanishing_discount}).
\prettyref{sec:numerical_schemes} introduces numerical schemes for
the HJBQVI problem \eqref{eq:hjbqvi_pde_and_boundary}, with numerical
examples given in \prettyref{sec:examples}.

\section{Policy iteration\label{sec:policy_iteration}}

In the sequel, we will see that each of the discretization schemes
for \eqref{eq:hjbqvi_pde_and_boundary} take the form of a Bellman
problem:
\begin{equation}
\text{find }v\in\mathbb{R}^{M}\text{ such that }\sup_{P\in\mathcal{P}}\left\{ -A(P)v+b(P)\right\} =0\label{eq:bellman_problem}
\end{equation}
where $A\colon\mathcal{P}\rightarrow\mathbb{R}^{M\times M}$ and $b\colon\mathcal{P}\rightarrow\mathbb{R}^{M}$.
It is understood that (i) $\mathcal{P}\coloneqq\prod_{i=1}^{M}\mathcal{P}_{i}$
is a finite product of nonempty sets, (ii) controls are row-decoupled:
\[
\left[A(P)\right]_{ij}\text{ and }\left[b(P)\right]_{i}\text{ depend only on }P_{i}\in\mathcal{P}_{i},
\]
(iii) the order on $\mathbb{R}^{M}$ (resp. $\mathbb{R}^{M\times M}$)
is element-wise:
\[
\text{for }x,y\in\mathbb{R}^{M}\text{, }x\geq y\text{ }\text{if and only if }x_{i}\geq y_{i}\text{ for all }i,
\]
and (iv) the supremum is induced by this order:
\[
\text{for }\left\{ x(P)\right\} _{P\in\mathcal{P}}\subset\mathbb{R}^{M}\text{, }\sup_{P\in\mathcal{P}}x(P)\text{ is a vector with components }\sup_{P\in\mathcal{P}}\left[x(P)\right]_{i}.
\]

Let $\proc{Solve}(A,b,x^{0})$ denote a call to a linear solver for
$Ax=b$ with initial guess $x^{0}$ (algebraically, $\proc{Solve}$
computes $x$ exactly; in practice, an iterative solver is used and
the choice of $x^{0}$ affects performance). A \emph{policy iteration}
algorithm is given by:

\begin{codebox}

\Procname{\proc{Policy-Iteration}$(\mathcal{P},A(\cdot),b(\cdot),v^{0})$}

\li \For $\ell\gets1,2,\ldots$ \li \Do

       Pick $P^{\ell}$ such that $-A(P^{\ell})v^{\ell-1}+b(P^{\ell})=\sup_{P\in\mathcal{P}}\{-A(P)v^{\ell-1}+b(P)\}$

\li    $v^{\ell}\coloneqq\proc{Solve}(A(P^{\ell}),b(P^{\ell}),v^{\ell-1})$

    \End

\end{codebox}
\begin{defn}[Monotone matrix]
 \label{def:monotone_matrix} A real square matrix $A$ is monotone
(in the sense of Collatz) if for all real vectors $v$, $Av\geq0$
implies $v\geq0$.
\end{defn}

We use the following assumptions:

\begin{enumerate}[label=(H\arabic{enumi}),ref=(H\arabic{enumi}),start=0]

\item \label{ass:inverse_is_bounded} $P\mapsto A(P)^{-1}$ is bounded
on the set $\{P\in\mathcal{P}\colon A(P)\text{ is nonsingular}\}$.

\item \label{ass:bounded_and_argsup} \begin{inparaenum}[(i)] \item
$A$ and $b$ are bounded and \item for all $x$ in $\mathbb{R}^{M}$,
there exists $P_{x}$ in $\mathcal{P}$ such that $-A(P_{x})x+b(P_{x})=\sup_{P\in\mathcal{P}}\{-A(P)x+b(P)\}$.\end{inparaenum}

\end{enumerate} \newcommand{\assweaker}{\hyperref[ass:bounded_and_argsup]{(H1.i)}}
\newcommand{\assargsup}{\hyperref[ass:bounded_and_argsup]{(H1.ii)}}

\begin{prop}[Convergence of policy iteration]
\label{prop:convergence_of_policy_iteration}  Suppose \ref{ass:inverse_is_bounded},
\ref{ass:bounded_and_argsup}, and that $A(P)$ is a monotone matrix
for all $P$ in $\mathcal{P}$. $(v^{\ell})_{\ell\geq1}$ defined
by \proc{Policy-Iteration} is nondecreasing and converges to the
unique solution $v$ of \eqref{eq:bellman_problem}. Moreover, if
$\mathcal{P}$ is finite, convergence occurs in at most $|\mathcal{P}|$
iterations (i.e. $v^{|\mathcal{P}|}=v^{|\mathcal{P}|+1}=\cdots$).
\end{prop}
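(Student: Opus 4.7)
The plan is to establish the four assertions in turn: algorithmic well-posedness, monotonicity of $(v^\ell)_{\ell\ge1}$, convergence to a solution of \eqref{eq:bellman_problem}, and uniqueness together with finite termination. I would begin by noting that a monotone matrix in the sense of \prettyref{def:monotone_matrix} is automatically invertible with $A(P)^{-1}\geq 0$ (apply the definition to $Av=\pm e_i$), so every linear solve in line 2 of \proc{Policy-Iteration} is well-defined; the existence of a maximizer $P^\ell$ is exactly \assargsup.

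For monotonicity, I would exploit the identity
\[
A(P^{\ell+1})(v^{\ell+1}-v^\ell) = -A(P^{\ell+1})v^\ell + b(P^{\ell+1}) = \sup_{P\in\mathcal{P}}\bigl\{-A(P)v^\ell+b(P)\bigr\} \geq -A(P^\ell)v^\ell + b(P^\ell) = 0,
\]
where the last equality uses $A(P^\ell)v^\ell=b(P^\ell)$ and the inequality uses that $P^{\ell+1}$ attains the supremum. Applying monotonicity of $A(P^{\ell+1})$ then gives $v^{\ell+1}\geq v^\ell$ for every $\ell\geq 1$.

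Convergence of $(v^\ell)$ to some limit $v^*$ follows from the monotone convergence theorem once boundedness is verified, and boundedness is immediate from $v^\ell = A(P^\ell)^{-1}b(P^\ell)$ together with \ref{ass:inverse_is_bounded} and \assweaker. The main subtlety is passing to the limit in the Bellman residual: the identity displayed above implies
\[
\sup_{P\in\mathcal{P}}\bigl\{-A(P)v^\ell+b(P)\bigr\} = A(P^{\ell+1})(v^{\ell+1}-v^\ell) \longrightarrow 0,
\]
since $A$ is uniformly bounded by \assweaker and $v^{\ell+1}-v^\ell\to 0$. Because $v\mapsto\sup_P\{-A(P)v+b(P)\}$ is Lipschitz with constant $\sup_P\|A(P)\|$ (as the supremum of uniformly Lipschitz affine functions), it is continuous in $v$, so the limit may be passed inside to yield $\sup_P\{-A(P)v^*+b(P)\}=0$. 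This Lipschitz/uniform-boundedness step is the heart of the argument and is precisely where \assweaker\ is indispensable.

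Finally, for uniqueness suppose $v_1,v_2$ are both solutions; by \assargsup\ pick $P_i\in\mathcal{P}$ with $A(P_i)v_i=b(P_i)$. The Bellman inequality at $v_1$ evaluated at $P_2$ gives $-A(P_2)v_1+b(P_2)\leq 0$, and substituting $b(P_2)=A(P_2)v_2$ yields $A(P_2)(v_1-v_2)\geq 0$; monotonicity of $A(P_2)$ then gives $v_1\geq v_2$, and swapping the roles of the indices gives the reverse inequality. For finite $\mathcal{P}$, the iterates lie in the set $\{A(P)^{-1}b(P):P\in\mathcal{P}\}$ of cardinality at most $|\mathcal{P}|$; since $(v^\ell)$ is nondecreasing it must stabilize within $|\mathcal{P}|$ steps, and any stabilization $v^{\ell+1}=v^\ell$ forces $\sup_P\{-A(P)v^\ell+b(P)\}=0$, so $v^\ell$ is the unique solution.
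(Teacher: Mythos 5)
Your proof is correct and follows essentially the same approach the paper uses: the paper delegates this proposition to Theorem~\ref{thm:convergence_of_sequential_policy_iteration} in Appendix~\ref{app:sequential_policy_iteration} (where the $\epsilon^\ell$-perturbed version is proved via the same identity $A(P^{\ell})(v^{\ell}-v^{\ell-1})=-A(P^{\ell})v^{\ell-1}+b(P^{\ell})$, boundedness from \ref{ass:inverse_is_bounded}/\assweaker, and the uniform-Lipschitz-of-the-sup argument formalized there as Lemma~\ref{lem:marginalized_supremum}) and to \cite{bokanowski2009some} for finite termination; your argument is just the exact-argmax ($\epsilon^\ell\equiv 0$) specialization, with the uniqueness step mirroring the paper's Theorem~\ref{thm:uniqueness} and a direct pigeonhole argument in place of the citation for finite termination.

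One small presentational note: the parenthetical ``apply the definition to $Av=\pm e_i$'' is slightly off as stated. Injectivity of $A(P)$ comes from applying monotonicity to $Av=0$ (which forces $v\geq 0$ and $-v\geq 0$), and nonnegativity of $A(P)^{-1}$ then comes from applying monotonicity to each column $A^{-1}e_i$; the conclusion is right but the one-line justification should be split into these two steps.
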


The monotone convergence of $(v^{\ell})_{\ell\geq1}$ to the unique
solution of \eqref{eq:bellman_problem} can be proven similarly to
\prettyref{thm:convergence_of_sequential_policy_iteration} of \prettyref{app:sequential_policy_iteration}.
See \cite[Theorem 2.1]{bokanowski2009some} for a proof of the finite
termination when $\mathcal{P}$ is finite. In practice, $\mathcal{P}$
is often finite, in which case \ref{ass:inverse_is_bounded} and \ref{ass:bounded_and_argsup}
are trivial.

\newcommand{\myrem}{\prettyref{thm:convergence_of_sequential_policy_iteration}
establishes the existence and uniqueness of solutions to \eqref{eq:bellman_problem}
independent of \assargsup. Owing to this, results that rely on \prettyref{prop:convergence_of_policy_iteration}
can be relaxed to exclude \assargsup, with the caveat that when $\mathcal{P}$
is infinite, $\proc{Policy-Iteration}$ be replaced by \proc{$\epsilon$-Policy-Iteration}
(see \prettyref{app:sequential_policy_iteration}). In this case,
the resulting sequence $(v^{\ell})_{\ell\geq1}$ is not necessarily
nondecreasing.}\begin{rem}
\myrem
\end{rem}

\section{Weakly chained diagonally dominant matrices\label{sec:no_loop_matrices}}

We say row $i$ of a complex matrix $A\coloneqq(a_{ij})$ is strictly
diagonally dominant (SDD) if $|a_{ii}|>\sum_{j\neq i}|a_{ij}|$. We
say $A$ is SDD if all of its rows are SDD. Weakly diagonally dominant
(WDD) is defined with weak inequality instead.

\begin{defn}
\label{def:no_loop_matrix} A complex square matrix $A$ is said to
be a weakly chained diagonally dominant (WCDD) if:

\begin{inparaenum}[(i)] \item \label{itm:no_loop_matrix_is_wdd}
$A$ is WDD;

\item \label{itm:no_loop_matrix_has_paths_to_sdd_rows} for each
row $r$, there exists a path in the graph of $A$ from $r$ to an
SDD row $p$.\end{inparaenum}
\end{defn}

Recall that the directed graph of an $M\times M$ complex matrix $A\coloneqq(a_{ij})$
is given by the vertices $\{1,\ldots,M\}$ and edges defined as follows:
there exists an edge from $i$ to $j$ if and only if $a_{ij}\neq0$.
Note that if $r$ is itself an SDD row, the trivial path $r\rightarrow r$
satisfies the requirement of \eqref{itm:no_loop_matrix_has_paths_to_sdd_rows}
in the above.

The nonsingularity of WCDD matrices is proven in \cite{shivakumar1974sufficient}.
We provide an elementary proof for the convenience of the reader:
\begin{lem}
\label{lem:no_loop_matrix_is_nonsingular} A WCDD matrix is nonsingular.
\end{lem}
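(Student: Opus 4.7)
The plan is to argue by contradiction, using a standard maximum-modulus argument along the graph of $A$. Suppose $A$ is WCDD but singular, so that there exists a nonzero vector $x$ with $Ax = 0$. Normalize so that $\|x\|_\infty = 1$, and let $S \coloneqq \{i : |x_i| = 1\}$. The goal is to show that $S$ must contain an SDD row, yielding the contradiction.

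First I would pick any $i \in S$ and expand the equation $(Ax)_i = 0$ as $a_{ii} x_i = -\sum_{j \ne i} a_{ij} x_j$. Taking absolute values, the weak diagonal dominance gives $|a_{ii}| |x_i| \le \sum_{j \ne i} |a_{ij}| |x_j| \le \sum_{j \ne i} |a_{ij}| \le |a_{ii}|$, where the last inequality uses $i$ being WDD. Since $|x_i| = 1$, both inequalities must be equalities. From the second equality, every index $j \ne i$ with $a_{ij} \ne 0$ must satisfy $|x_j| = 1$, i.e., $j \in S$. In particular, every out-neighbor of $i$ in the directed graph of $A$ lies in $S$.

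Now I would invoke the WCDD hypothesis: pick some $r \in S$ and take a path $r = i_0 \to i_1 \to \cdots \to i_n = p$ in the graph of $A$ ending at an SDD row $p$. By the previous step applied inductively (since $a_{i_{k-1} i_k} \ne 0$ and $i_{k-1} \in S$ implies $i_k \in S$), we conclude $p \in S$, so $|x_p| = 1$. But repeating the maximum-modulus estimate at row $p$, the strict diagonal dominance of $p$ gives $|a_{pp}| = |a_{pp}||x_p| \le \sum_{j \ne p} |a_{pj}| |x_j| \le \sum_{j \ne p} |a_{pj}| < |a_{pp}|$, which is a contradiction.

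The only mildly delicate point is the inductive propagation of the property $|x_j| = 1$ along edges of the graph; the key observation that every out-neighbor of an index in $S$ again lies in $S$ is what lets the path in the WCDD definition do its job. Everything else is just the standard trick of picking the row where the modulus of $x$ is maximal.
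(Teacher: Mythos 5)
Your proof is correct and takes essentially the same approach as the paper: a maximum-modulus argument at a row where the null vector attains its sup-norm, combined with propagating equality along a path in the graph of $A$ to an SDD row, which is exactly the paper's argument (the paper merely phrases the first estimate via Gershgorin).
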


\begin{proof}
Suppose $\lambda=0$ is an eigenvalue of $A\coloneqq(a_{ij})$. Let
$v\neq0$ be an associated eigenvector with components of modulus
at most unity. Let $r$ be such that $|v_{r}|=1\geq|v_{j}|$ for all
$j$. By the Gershgorin circle theorem,
\[
\left|\lambda-a_{rr}\right|=\left|a_{rr}\right|\leq\sum_{j\neq r}\left|a_{rj}\right|\left|v_{j}\right|\leq\sum_{j\neq r}\left|a_{rj}\right|.
\]
Since $A$ is WDD, it follows that $|a_{rr}|=\sum_{j\neq r}|a_{rj}|$,
and hence $r$ is not an SDD row. Therefore, there exists a path $r\rightarrow p_{1}\rightarrow\cdots\rightarrow p_{k}$
where $p_{k}$ is an SDD row. Since
\[
\left|a_{rr}\right|=\sum_{j\neq r}\left|a_{rj}\right|\left|v_{j}\right|=\sum_{j\neq r}\left|a_{rj}\right|,
\]
it follows that $|v_{j}|=1$ whenever $|a_{rj}|\neq0$. Because $|a_{rp_{1}}|\neq0$,
$|v_{p_{1}}|=1$. Repeating the same argument as above with $r=p_{1}$
yields $|a_{p_{1}p_{1}}|=\sum_{j\neq p_{1}}|a_{p_{1}j}|$, and hence
$p_{1}$ is not an SDD row. Continuing the procedure, $p_{k}$ is
not SDD, a contradiction.
\end{proof}

We recall some well-known classes of matrices:
\begin{defn}
\label{def:z_matrix} A Z-matrix is a real matrix with nonpositive
off-diagonals.
\end{defn}

\begin{defn}
\label{def:m_matrix} An M-matrix is a monotone Z-matrix.
\end{defn}

We are now ready to state a fundamental characterization of WDD M-matrices:
\begin{thm}[Characterization theorem]
 \label{thm:no_loop_matrix_characterization} The following are equivalent:

\begin{inparaenum}[(i)] \item $A$ is a WCDD Z-matrix with positive
diagonals; \label{itm:no_loop_z_matrix_positive_diagonals}

\item $A$ is a WDD M-matrix. \label{itm:wdd_m_matrix}

\end{inparaenum}
\end{thm}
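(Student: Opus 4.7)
The plan is to prove the two implications separately, relying on \prettyref{lem:no_loop_matrix_is_nonsingular} (WCDD implies nonsingular) and elementary ``minimum principle'' / sign arguments along paths in the graph of $A$.

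\textbf{Proof of \eqref{itm:no_loop_z_matrix_positive_diagonals} $\Rightarrow$ \eqref{itm:wdd_m_matrix}.} Given that $A$ is a WCDD Z-matrix with positive diagonals, WDD is immediate from \prettyref{def:no_loop_matrix}\eqref{itm:no_loop_matrix_is_wdd}, and nonsingularity comes for free from \prettyref{lem:no_loop_matrix_is_nonsingular}. The only real content is monotonicity. I would argue by contradiction: suppose $Av\ge 0$ but some component of $v$ is negative, and let $r$ attain the minimum $v_r<0$. Expanding
$(Av)_r = a_{rr} v_r + \sum_{j\ne r} a_{rj} v_j$
and using $a_{rj}\le 0$ together with $v_j\ge v_r$, one obtains the bound $(Av)_r \le v_r(a_{rr}-\sum_{j\ne r}|a_{rj}|)\le 0$, so in fact $(Av)_r=0$ and every inequality in the chain is an equality. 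Equality forces $r$ to fail to be SDD and forces $v_j=v_r$ for every $j$ with $a_{rj}\ne 0$, i.e.\ for every out-neighbour of $r$ in the graph of $A$. Repeating the argument along a path $r\to p_1\to\dots\to p_k$ guaranteed by \prettyref{def:no_loop_matrix}\eqref{itm:no_loop_matrix_has_paths_to_sdd_rows} shows that each $p_i$ also fails to be SDD, contradicting the choice of $p_k$.

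\textbf{Proof of \eqref{itm:wdd_m_matrix} $\Rightarrow$ \eqref{itm:no_loop_z_matrix_positive_diagonals}.} Being a Z-matrix and WDD is given. Positivity of the diagonal follows by applying monotonicity to $v=-e_i$: then $Av\ge 0$ (its $i$th entry is $-a_{ii}$ and its other entries are $-a_{ji}\ge 0$ because $A$ is a Z-matrix), so $v\ge 0$, a contradiction unless $a_{ii}>0$. The interesting step is deriving the path condition. Assume toward a contradiction that some row $r$ cannot reach an SDD row, and let $S$ be the set of vertices reachable from $r$ in the graph of $A$ (so $r\in S$ and no row indexed by $S$ is SDD). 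By definition of $S$, $a_{ij}=0$ whenever $i\in S$ and $j\notin S$, so for $i\in S$ the WDD inequality reduces to $a_{ii}=\sum_{j\in S,\,j\ne i}|a_{ij}|$ (equality because row $i$ is not SDD). Define $v\in\mathbb{R}^M$ by $v_i=-1$ for $i\in S$ and $v_i=0$ otherwise. A direct computation using the Z-matrix property gives $(Av)_i=0$ for $i\in S$ and $(Av)_i=-\sum_{j\in S}a_{ij}\ge 0$ for $i\notin S$, so $Av\ge 0$ while $v\not\ge 0$, contradicting monotonicity.

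\textbf{Main obstacle.} The forward direction is a fairly standard minimum-principle argument, strengthened by the path-to-SDD hypothesis. The harder direction is \eqref{itm:wdd_m_matrix} $\Rightarrow$ \eqref{itm:no_loop_z_matrix_positive_diagonals}: one has to manufacture a vector $v$ that certifies a violation of monotonicity when the WCDD path condition fails, and the right choice is the indicator of the ``reachable cone'' $S$ from the offending row $r$. The delicate point is verifying that the WDD inequality collapses to equality on $S$ (which uses both that no row of $S$ is SDD and that $S$ is closed under out-edges), since that is exactly what makes $(Av)_i$ vanish inside $S$.
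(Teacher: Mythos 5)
Your proposal is correct, and it takes a genuinely more self-contained route than the paper.

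For \eqref{itm:no_loop_z_matrix_positive_diagonals} $\Rightarrow$ \eqref{itm:wdd_m_matrix}, the paper invokes \prettyref{lem:no_loop_matrix_is_nonsingular} and then cites a known result of Plemmons (a nonsingular WDD Z-matrix with positive diagonals is an M-matrix). You instead prove monotonicity directly by a minimum-principle argument: from $Av\ge 0$ and a negative minimizer $v_r$ you force equality in the WDD inequality at $r$ and propagate $v_j=v_r$ along out-edges, so that every row on the path fails to be SDD --- contradicting the terminal SDD row guaranteed by the WCDD hypothesis. This is the same mechanism as the paper's eigenvector argument in \prettyref{lem:no_loop_matrix_is_nonsingular}, but deployed to get monotonicity directly rather than nonsingularity plus a cited theorem.

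For \eqref{itm:wdd_m_matrix} $\Rightarrow$ \eqref{itm:no_loop_z_matrix_positive_diagonals}, the paper cites Plemmons for positive diagonals and then proves singularity by reordering so that the set $R$ of all rows violating the path condition sits in a leading block $B$; since $B$ has zero row sums and the complementary block $D$ is WCDD (hence nonsingular), a kernel vector $(e,x)^\intercal$ with $Dx=-Ce$ is constructed. You instead derive positive diagonals directly from monotonicity (via $v=-e_i$), and for the path condition you take only the out-reachable set $S$ from a single offending row and use the indicator $v=-\mathds{1}_S$ to certify $Av\ge 0$ with $v\not\ge 0$. This is cleaner in two respects: it avoids the auxiliary linear solve $Dx=-Ce$ and the reordering/block bookkeeping, and it does not invoke \prettyref{lem:no_loop_matrix_is_nonsingular} a second time. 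Note that the paper's argument yields the strictly stronger conclusion that $A$ is singular (which of course implies failure of monotonicity), whereas yours shows non-monotonicity directly; both suffice to contradict (ii). The key observation common to both is that the bad set ($R$ in the paper, $S$ in yours) is closed under out-edges and contains no SDD row, which is what makes the WDD inequality collapse to equality there.

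One small presentational remark: in your forward-direction chain you should flag that the positive diagonal $a_{rr}>0$ is used implicitly (you write $a_{rr}-\sum_{j\ne r}|a_{rj}|$ rather than $|a_{rr}|-\sum_{j\ne r}|a_{rj}|$), but this is harmless since positivity of the diagonal is part of hypothesis (i).
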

\begin{proof}
Since a nonsingular WDD Z-matrix with positive diagonals is an M-matrix
(a consequence of, e.g., \cite[Theorem 1.$A_3$]{plemmons1977m}),
\eqref{itm:no_loop_z_matrix_positive_diagonals} implies \eqref{itm:wdd_m_matrix}
follows by \prettyref{lem:no_loop_matrix_is_nonsingular}.

As for the converse, since an M-matrix has positive diagonal elements
(a consequence of, e.g., \cite[Theorem 1.$K_{35}$]{plemmons1977m}),
it is sufficient to show that a WDD Z-matrix $A\in\mathbb{R}^{n\times n}$
with positive diagonals not satisfying \prettyref{def:no_loop_matrix}
\eqref{itm:no_loop_matrix_has_paths_to_sdd_rows} is singular. Let
$R\subset\{1,\ldots,n\}$ be the set of rows $r$ of $A$ violating
\prettyref{def:no_loop_matrix} \eqref{itm:no_loop_matrix_has_paths_to_sdd_rows}.
Due to our assumptions, there is at least one such row, and hence
$R$ is nonempty. Without loss of generality, we can assume $R=\{1,\ldots,m\}$
for some $1\leq m\leq n$ (otherwise, reorder $A$). Let $e\in\mathbb{R}^{m}$
denote the column vector whose elements are all unity. If $m=n$,
each row sum of $A$ is zero (i.e., $Ae=0$), implying that $A$ is
singular. If $m<n$, $A$ has the block structure\[
A = \left( \begin{array}{c|c}
	B & 0 \\
	\hline
	C & D
\end{array} \right)
\text{ where } B \in \mathbb{R}^{m\times m}.
\] Because rows that violate \prettyref{def:no_loop_matrix} \eqref{itm:no_loop_matrix_has_paths_to_sdd_rows}
were ``isolated'' to the block $B$, the partition above ensures
that $D$ is WCDD. Therefore, by \prettyref{lem:no_loop_matrix_is_nonsingular},
the linear system $Dx=-Ce$ has a unique solution $x$. Moreover,
since the row sums of $B$ are zero, $Be=0$. It follows that
\[
A\begin{pmatrix}e\\
x
\end{pmatrix}=\begin{pmatrix}Be\\
Ce+Dx
\end{pmatrix}=0,
\]
and hence $A$ is singular.
\end{proof}

This characterization is tight: an M-matrix need not be WCDD (e.g.
{\scriptsize$\begin{pmatrix}1 & -2\\
0 & \phantom{-}1
\end{pmatrix}$}).

We mention that \eqref{itm:no_loop_z_matrix_positive_diagonals} implies
\eqref{itm:wdd_m_matrix} of \prettyref{thm:no_loop_matrix_characterization}
appears in \cite{bramble1964finite}. Therein, WCDD Z-matrices with
positive diagonals are referred to as \emph{matrices of positive type.
}To the authors' best knowledge, the converse does not appear in the
literature.

\section{The fixed point problem \eqref{eq:fixed_point_problem}\label{sec:fixed_point_problem}}

\subsection{Convergence of policy iteration}

We assume $\mathcal{W}\coloneqq\prod_{i=1}^{M}\mathcal{W}_{i}$ and
$\mathcal{Z}\coloneqq\prod_{i=1}^{M}\mathcal{Z}_{i}$ appearing in
problem \eqref{eq:fixed_point_problem} are finite products of nonempty
sets. Let
\begin{equation}
\mathcal{P}\coloneqq{\textstyle \prod_{i=1}^{M}}\mathcal{P}_{i}\text{ where }\mathcal{P}_{i}\coloneqq\mathcal{W}_{i}\times\mathcal{Z}_{i}\times\mathcal{D}_{i}\text{ and }\emptyset\neq\mathcal{D}_{i}\subset\left\{ 0,1\right\} .\label{eq:fixed_point_problem_control_set}
\end{equation}
We associate with each $\psi\coloneqq(\psi_{1},\ldots,\psi_{M})$
in $\mathcal{D}\coloneqq\prod_{i=1}^{M}\mathcal{D}_{i}$ a diagonal
matrix $\Psi\coloneqq\operatorname{diag}(\psi)$. We use $\psi$ and
$\Psi$ interchangeably. We write $P\coloneqq(w,z,\psi)\in\mathcal{P}$
where $w\in\mathcal{W}$ and $z\in\mathcal{Z}$. We can transform
problem \eqref{eq:fixed_point_problem} into the form \eqref{eq:bellman_problem}
by taking
\begin{align}
A(P) & \coloneqq\left(I-\Psi\right)\left(I-L(w)\right)+\delta\Psi\left(I-B(z)\right)\nonumber \\
b(P) & \coloneqq\left(I-\Psi\right)c(w)+\delta\Psi k(z)\label{eq:fixed_point_problem_system}
\end{align}
where $\delta=1$ ($L(w)$ and $B(z)$ are matrices; $c(w)$ and $k(z)$
are vectors). To keep the material general, we henceforth assume the
less restrictive condition $\delta>0$ instead of $\delta=1$. Before
considering the well-posedness of
\begin{equation}
\text{problem }\eqref{eq:bellman_problem}\text{ subject to }\eqref{eq:fixed_point_problem_control_set}\text{ and }\eqref{eq:fixed_point_problem_system},\label{eq:problem_combined}
\end{equation}
we establish that the set of solutions to \eqref{eq:problem_combined}
is independent of the choice of $\delta$:
\begin{lem}
\label{lem:invariance}$v$ is a solution of \eqref{eq:problem_combined}
with $\delta=1$ if and only if it is a solution of \eqref{eq:problem_combined}
with arbitrary $\delta=\delta_{0}>0$.
\end{lem}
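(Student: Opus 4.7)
The plan is to exploit the row-decoupled structure of the problem together with the fact that $\Psi = \operatorname{diag}(\psi)$ has diagonal entries in $\{0,1\}$, so that on each row $i$ the parameter $\delta$ either does not appear at all or appears as a positive multiplicative factor.

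First, I would rewrite the Bellman condition \eqref{eq:bellman_problem} row by row. By the row-decoupling in the hypotheses of \prettyref{sec:policy_iteration}, $v$ solves \eqref{eq:problem_combined} if and only if, for every $i$,
\[
\sup_{P_i \in \mathcal{P}_i}\bigl\{-[A(P)v]_i + [b(P)]_i\bigr\} = 0.
\]
Since $\mathcal{P}_i = \mathcal{W}_i \times \mathcal{Z}_i \times \mathcal{D}_i$ with $\mathcal{D}_i \subset \{0,1\}$, I would split the supremum according to the value of $\psi_i$. When $\psi_i = 0$, the row $[A(P)]_{i,\cdot}$ coincides with row $i$ of $I - L(w)$ and $[b(P)]_i = [c(w)]_i$, so the resulting expression is independent of $\delta$; when $\psi_i = 1$, the row equals $\delta$ times row $i$ of $I - B(z)$ and $[b(P)]_i = \delta[k(z)]_i$, giving exactly $\delta$ times a $\delta$-independent quantity.

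Thus, defining
\[
S_1^{(i)}(v) \coloneqq \sup_{w_i \in \mathcal{W}_i} \bigl\{-[(I-L(w))v]_i + [c(w)]_i\bigr\},
\qquad
S_2^{(i)}(v) \coloneqq \sup_{z_i \in \mathcal{Z}_i} \bigl\{-[(I-B(z))v]_i + [k(z)]_i\bigr\},
\]
with the convention that $S_j^{(i)}$ is $-\infty$ whenever the corresponding value of $\psi_i$ is excluded from $\mathcal{D}_i$, the $i$-th Bellman equation becomes
\[
\max\bigl(S_1^{(i)}(v),\ \delta\, S_2^{(i)}(v)\bigr) = 0.
\]
Neither $S_1^{(i)}(v)$ nor $S_2^{(i)}(v)$ depends on $\delta$.

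The last step is the key elementary observation: for any $a,b \in \mathbb{R}\cup\{-\infty\}$ and any $\delta>0$, the condition $\max(a,\delta b)=0$ holds if and only if $a \le 0$, $b \le 0$, and at least one of them equals $0$, which is equivalent to $\max(a,b)=0$ and in particular independent of the specific positive value of $\delta$. Applying this to each row with $a = S_1^{(i)}(v)$ and $b = S_2^{(i)}(v)$ shows that the system is satisfied for $\delta = 1$ if and only if it is satisfied for any $\delta = \delta_0 > 0$, which is the claim. The only mildly delicate point, and the part I would be most careful to justify, is the bookkeeping when $\mathcal{D}_i$ is a strict subset of $\{0,1\}$, but in those cases one of the two terms is simply absent and the equivalence is immediate.
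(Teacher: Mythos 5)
Your proof is correct, and while the key idea is the same as the paper's (the parameter $\delta$ enters only as a positive multiplicative factor on rows with $\psi_i=1$, so it cannot change the sign of any quantity), the presentation is genuinely more elementary. The paper's proof works at the level of vectors: it extracts, via a pigeonhole argument, a sequence $(P^\ell)$ with constant $\psi^\ell=\psi$ along which the residual at $\delta=1$ tends to zero, then multiplies by the diagonal matrix $I-\Psi+\delta_0\Psi$ to pass to $\delta=\delta_0$, and finally rules out a strict inequality by multiplying by $I-\Psi+\delta_0^{-1}\Psi$. You instead observe directly that, by the row-decoupled structure of $\mathcal{P}_i=\mathcal{W}_i\times\mathcal{Z}_i\times\mathcal{D}_i$, the $i$-th component of the Bellman residual is $\max(S_1^{(i)}(v),\delta S_2^{(i)}(v))$ (or one of the two terms alone when $\mathcal{D}_i$ is a singleton), where neither $S_1^{(i)}$ nor $S_2^{(i)}$ depends on $\delta$, and then invoke the scalar fact that $\max(a,\delta b)=0$ is equivalent to $\max(a,b)=0$ for any $\delta>0$. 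This avoids the sequence and pigeonhole machinery entirely and makes the $\delta$-invariance transparent at the level of a single scalar inequality. One tiny point worth making explicit is that the scalar equivalence continues to hold if $S_2^{(i)}(v)$ takes the value $-\infty$ or $+\infty$ (which is not excluded a priori since $\mathcal{Z}_i$ need not be compact), but this is immediate and does not affect the argument.
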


A proof of the above is given in \prettyref{app:proof_of_invariance}.
In the sequel, we exploit the fact that policy iteration may converge
more rapidly for particular choices of $\delta$. We now visit, as
a motivating example, an infinite-horizon MDP with vanishing discount:

\newcommand{\myexample}{\label{exa:optimal_control_of_markov_chain_with_vanishing_discount}
Let $(X^{n})_{n\geq0}$ be a controlled homogeneous Markov chain on
a finite state space $\{1,\ldots,M\}$. A control at state $i$ is
a member of $\mathcal{P}_{i}$ in \eqref{eq:fixed_point_problem_control_set}
and written $P_{i}\coloneqq(w_{i},z_{i},\psi_{i})$. The transition
probabilities of the Markov chain are
\[
\mathbb{P}(X^{n+1}=j\mid X^{n}=i)=\begin{cases}
w_{ij} & \text{if }\psi_{i}=0\\
z_{ij} & \text{if }\psi_{i}=1
\end{cases}
\]
where $w_{i}\coloneqq(w_{i1},\ldots,w_{iM})\geq0$ and $w_{i}e=1$
(similarly for $z_{i}$). That is, members of $\mathcal{W}_{i}$ and
$\mathcal{Z}_{i}$ are $M$-dimensional probability vectors. Let
\begin{equation}
v_{i}\coloneqq\sup_{P\in\mathcal{P}}\mathbb{E}\left[\sum_{n=0}^{\infty}U(X^{n},P)\prod_{m=0}^{n-1}D(X^{m},P)\,\biggr\vert\,X^{0}=i\right]\text{ for all }1\leq i\leq M\label{eq:optimal_control_of_markov_chain_with_vanishing_discount}
\end{equation}
where
\[
U(i,P)\coloneqq\begin{cases}
c_{i}(w_{i}) & \text{if }\psi_{i}=0\\
k_{i}(z_{i}) & \text{if }\psi_{i}=1
\end{cases}\text{ and }D(i,P)\coloneqq\begin{cases}
1/\left(1+\rho\right) & \text{if }\psi_{i}=0;\\
1 & \text{if }\psi_{i}=1.
\end{cases}
\]
In the above, $\rho>0$ is a discount factor. \cite[Lemma 5]{chancelier2007policy}
establishes that the dynamic programming equation associated with
\eqref{eq:optimal_control_of_markov_chain_with_vanishing_discount}
is exactly \eqref{eq:problem_combined} with $[L(w)]_{ij}\coloneqq w_{ij}/(1+\rho)$,
$[B(z)]_{ij}\coloneqq z_{ij}$, $[c(w)]_{i}\coloneqq c_{i}(w_{i})$,
and $[k(z)]_{i}\coloneqq k_{i}(z_{i})$.}\begin{example}
\myexample
\end{example}

In the above, states $i$ on which $\psi_{i}=1$ are the ``trouble''
states with vanishing discount factor. In fact, requiring $\psi_{i}=0$
for all $i$ returns us to a nonvanishing discount factor problem
whose well-posedness is easy to establish.

The following assumptions will prove paramount:

\begin{enumerate}[label=(H\arabic{enumi}),ref=(H\arabic{enumi}),start=2]

\item \label{ass:path_to_sdd} For each $P\coloneqq(w,z,\psi)$ in
$\mathcal{P}$ and state $i$ with $\psi_{i}=1$, there exists a path
in the graph of $B(z)$ from $i$ to some state $j(i)$ with $\psi_{j(i)}=0$.

\item \label{ass:sdd_and_wdd} For each $P\coloneqq(w,z,\psi)$ in
$\mathcal{P}$, $I-L(w)$ is an SDD Z-matrix with positive diagonals
and $I-B(z)$ is a WDD Z-matrix whose diagonals satisfy $0\leq[B(z)]_{ii}\leq1$.

\end{enumerate}
\begin{thm}
\label{thm:fixed_point_problem} Suppose \ref{ass:inverse_is_bounded}\emph{\textendash }\ref{ass:sdd_and_wdd}.
$(v^{\ell})_{\ell\geq1}$ defined by $\proc{Policy-Iteration}$ is
nondecreasing and converges to the unique solution $v$ of \eqref{eq:problem_combined}.
Moreover, if $\mathcal{P}$ is finite, convergence occurs in at most
$|\mathcal{P}|$ iterations.
\end{thm}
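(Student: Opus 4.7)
The plan is to verify the hypotheses of \prettyref{prop:convergence_of_policy_iteration} for the Bellman problem obtained from \eqref{eq:problem_combined} via the substitution \eqref{eq:fixed_point_problem_system}. Assumptions \ref{ass:inverse_is_bounded} and \ref{ass:bounded_and_argsup} are given by hypothesis, so the only thing to check is that $A(P)$ is a monotone matrix for every $P$ in $\mathcal{P}$. Since an M-matrix is monotone (\prettyref{def:m_matrix}), and since \prettyref{thm:no_loop_matrix_characterization} identifies WDD M-matrices with WCDD Z-matrices with positive diagonals, the task reduces to showing that $A(P)$ is such a matrix.

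Fix $P\coloneqq(w,z,\psi)$. For $i\neq j$, the off-diagonal entry
\[
[A(P)]_{ij} = -(1-\psi_i)[L(w)]_{ij} - \delta\psi_i[B(z)]_{ij}
\]
is nonpositive, since by \ref{ass:sdd_and_wdd} both $I-L(w)$ and $I-B(z)$ are Z-matrices; hence $A(P)$ is a Z-matrix. For the diagonal, the case $\psi_i=0$ gives $[A(P)]_{ii}=1-[L(w)]_{ii}>0$ directly from \ref{ass:sdd_and_wdd}. In the case $\psi_i=1$, \ref{ass:path_to_sdd} demands a \emph{nontrivial} path in the graph of $B(z)$ starting from $i$ (since its endpoint has $\psi=0 \neq \psi_i$), which forces at least one off-diagonal in row $i$ of $B(z)$ to be nonzero; combined with the WDD property of $I-B(z)$, this yields $[B(z)]_{ii}<1$, so $[A(P)]_{ii}=\delta(1-[B(z)]_{ii})>0$.

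It remains to verify the WCDD property. Every row with $\psi_i=0$ is SDD in $A(P)$ by the SDD property of $I-L(w)$, while rows with $\psi_i=1$ are at least WDD. Given any $i$ with $\psi_i=1$, apply \ref{ass:path_to_sdd} to get a path $i=i_0\to i_1\to\cdots\to i_n$ in the graph of $B(z)$ with $\psi_{i_n}=0$, and let $k$ be the least index with $\psi_{i_k}=0$. For each $\ell<k$ we have $\psi_{i_\ell}=1$, so $[A(P)]_{i_\ell,i_{\ell+1}}=-\delta[B(z)]_{i_\ell,i_{\ell+1}}\neq 0$, meaning that the edge persists in the graph of $A(P)$. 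Hence $i_0\to\cdots\to i_k$ is a path in $A(P)$ to the SDD row $i_k$, as required.

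Combining these observations, \prettyref{thm:no_loop_matrix_characterization} gives that $A(P)$ is a WDD M-matrix, hence monotone, and \prettyref{prop:convergence_of_policy_iteration} delivers both the monotone convergence to the unique solution of \eqref{eq:problem_combined} and the finite-termination conclusion when $\mathcal{P}$ is finite. The main obstacle is the WCDD verification; the key observation there is that in any row with $\psi_i=1$ the off-diagonal pattern of $A(P)$ mirrors that of $B(z)$ (scaled by $-\delta$), which is precisely what allows a path guaranteed by \ref{ass:path_to_sdd} to be transported into $A(P)$—as long as one stays within $\psi=1$ vertices, which is exactly sufficient to reach an SDD row.
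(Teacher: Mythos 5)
Your proof is correct and follows exactly the route the paper takes: verify that under \ref{ass:path_to_sdd} and \ref{ass:sdd_and_wdd} each $A(P)$ is a WCDD Z-matrix with positive diagonals, then invoke \prettyref{thm:no_loop_matrix_characterization} to conclude monotonicity and \prettyref{prop:convergence_of_policy_iteration} to conclude. The paper states this verification in one line; you have spelled out the row-by-row check (including the subtle point that the path hypothesis forces $[B(z)]_{ii}<1$ and that paths in the graph of $B(z)$ persist in the graph of $A(P)$ while $\psi=1$), all of which is sound.
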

\begin{proof}
\ref{ass:path_to_sdd} and \ref{ass:sdd_and_wdd} ensure that $A(P)$
is a WCDD Z-matrix with positive diagonals. The desired result follows
from \prettyref{thm:no_loop_matrix_characterization} and \prettyref{prop:convergence_of_policy_iteration}.
\end{proof}

\begin{cor}
\label{cor:optimal_control_of_markov_chain_with_vanishing_discount}
Consider Example \ref{exa:optimal_control_of_markov_chain_with_vanishing_discount}.
Suppose \ref{ass:inverse_is_bounded}\emph{\textendash }\ref{ass:path_to_sdd}.
$(v^{\ell})_{\ell\geq1}$ defined by $\proc{Policy-Iteration}$ converges
to $v$ in $\mathbb{R}^{M}$ satisfying \eqref{eq:optimal_control_of_markov_chain_with_vanishing_discount}.
\end{cor}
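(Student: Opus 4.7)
The plan is to deduce the corollary directly from Theorem~\ref{thm:fixed_point_problem}. Hypotheses \ref{ass:inverse_is_bounded}, \ref{ass:bounded_and_argsup}, and \ref{ass:path_to_sdd} are supplied by assumption, so the only work is to verify \ref{ass:sdd_and_wdd} from the concrete form of $L(w)$ and $B(z)$ specified in Example~\ref{exa:optimal_control_of_markov_chain_with_vanishing_discount}, and then transfer the Bellman conclusion back to the value function via the cited lemma.

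First I would check \ref{ass:sdd_and_wdd}. Because each $w_i$ is a probability vector, $L(w)=(w_{ij}/(1+\rho))$ has nonnegative entries and row sums equal to $1/(1+\rho)$. Therefore $I-L(w)$ is a Z-matrix with diagonals at least $\rho/(1+\rho)>0$, and strict row-wise diagonal dominance holds since the total off-diagonal mass in row $i$ equals $(1-w_{ii})/(1+\rho)$, which is strictly less than the diagonal $1-w_{ii}/(1+\rho)$ whenever $\rho>0$. Similarly, because each $z_i$ is a probability vector, $B(z)=(z_{ij})$ is nonnegative with row sums equal to one; hence $[B(z)]_{ii}\in[0,1]$, the off-diagonals of $I-B(z)$ are nonpositive, and each of its rows sums to zero, yielding weak diagonal dominance. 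This establishes \ref{ass:sdd_and_wdd}.

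With all hypotheses of Theorem~\ref{thm:fixed_point_problem} in place, the iterates $(v^{\ell})$ of \proc{Policy-Iteration} converge nondecreasingly to the unique solution $v$ of the Bellman problem \eqref{eq:problem_combined} with the $(L,B,c,k)$ of the example. To finish, I invoke \cite[Lemma 5]{chancelier2007policy} which, as recorded in Example~\ref{exa:optimal_control_of_markov_chain_with_vanishing_discount}, identifies that Bellman problem with the dynamic programming equation for \eqref{eq:optimal_control_of_markov_chain_with_vanishing_discount}; hence $v$ is the asserted value function. I anticipate no real obstacle: the content of the proof is the straightforward translation of the stochastic-matrix structure (probability-vector rows, contraction factor $1/(1+\rho)<1$) into the Z-matrix and diagonal-dominance language of \ref{ass:sdd_and_wdd}.
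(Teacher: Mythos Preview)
Your proposal is correct and mirrors the paper's own treatment: the corollary is stated without proof because it is immediate from Theorem~\ref{thm:fixed_point_problem} once one verifies \ref{ass:sdd_and_wdd} for the specific $L(w)$ and $B(z)$ of Example~\ref{exa:optimal_control_of_markov_chain_with_vanishing_discount}, and then invokes \cite[Lemma 5]{chancelier2007policy} to identify the Bellman solution with the value in \eqref{eq:optimal_control_of_markov_chain_with_vanishing_discount}. Your verification of \ref{ass:sdd_and_wdd} via the stochastic-row structure and the contraction factor $1/(1+\rho)$ is exactly the intended (and only) content.
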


An example satisfying \ref{ass:path_to_sdd} is given:

\begin{example} \label{exa:unidirectional} Consider Example \ref{exa:optimal_control_of_markov_chain_with_vanishing_discount}.
Suppose all $P\coloneqq(w,z,\psi)$ in $\mathcal{P}$ satisfy
\begin{equation}
\psi_{1}=0\text{ and }z_{ij}=0\text{ if }1<i\leq j.\label{eq:unidirectional}
\end{equation}
This corresponds to (i) a nonvanishing discount at state $1$ and
that (ii) transitions from a state with vanishing discount are unidirectional
(if $\psi_{X^{n}}=1$, $X^{n+1}<X^{n}$ a.s.). See \prettyref{fig:unidirectional}
for example graphs of $B(z)$ subject to \eqref{eq:unidirectional}.

\end{example}

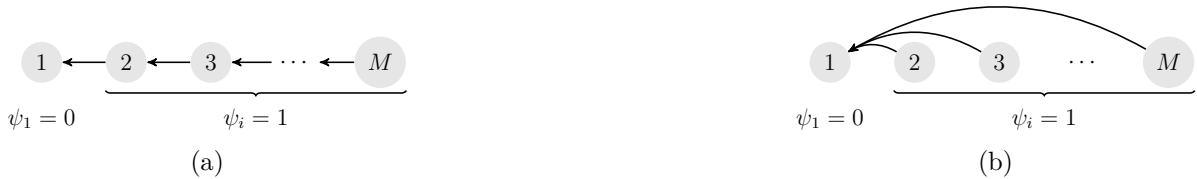
\begin{figure}
\begin{centering}
\subfloat[\label{fig:unidirectional_multiple_hops}]{\begin{centering}
\scalebox{\graphscale}{
	\begin{tikzpicture}

	\node [graph node] (1) {1};
	\node [graph node, right of=1] (2) {2};
	\node [graph node, right of=2] (3) {3};
	\node [right of=3] (dots) {$\cdots$};
	\node [graph node, right of=dots] (M) {$M$};

	\draw[graph edge, ->] (2) to (1);
	\draw[graph edge, ->] (3) to (2);
	\draw[graph edge, ->] (dots) to (3);
	\draw[graph edge, ->] (M) to (dots);

	\draw [thick, decoration={brace, raise=0.5cm}, decorate] (M.east) -- (2.west) node [pos=0.5, yshift=-1cm] {$\psi_i=1$};
	\node [below of=1, node distance=1cm] {$\psi_1=0$};

	\end{tikzpicture}
}
\par\end{centering}
}\hfill{}\subfloat[\label{fig:undirectional_one_hop}]{\begin{centering}
\scalebox{\graphscale}{
	\begin{tikzpicture}

	\node [graph node] (1) {1};
	\node [graph node, right of=1] (2) {2};
	\node [graph node, right of=2] (3) {3};
	\node [right of=3] (dots) {$\cdots$};
	\node [graph node, right of=dots] (M) {$M$};

	\draw[graph edge, ->] (2) to [bend right] (1);
	\draw[graph edge, ->] (3) to [bend right] (1);
	\draw[graph edge, ->] (M) to [bend right] (1);

	\draw [thick, decoration={brace, raise=0.5cm}, decorate] (M.east) -- (2.west) node [pos=0.5, yshift=-1cm] {$\psi_i=1$};
	\node [below of=1, node distance=1cm] {$\psi_1=0$};

	\end{tikzpicture}
}
\par\end{centering}
}
\par\end{centering}
\caption{Graphs of possible matrices $B(z)$ from Example \ref{exa:unidirectional}\label{fig:unidirectional}}
\end{figure}

\subsection{Uniqueness}

Let
\begin{equation}
\mathbb{L}v\coloneqq\sup_{w\in\mathcal{W}}\left\{ L(w)v+c(w)\right\} \text{ and }\mathbb{B}v\coloneqq\sup_{z\in\mathcal{Z}}\left\{ B(z)v+k(z)\right\} .\label{eq:fixed_point_problem_operators}
\end{equation}
The condition \ref{ass:path_to_sdd} turns out to be too restrictive
for some problems of interest. However, the following weaker property
of $\mathbb{B}$ is not unusual:

\begin{enumerate}[label=(H\arabic{enumi}),ref=(H\arabic{enumi}),start=4]

\item \label{ass:subidempotence} For each solution $v$ of \eqref{eq:problem_combined}
and each state $i$, there exist integers $m(i)$ and $n(i)$ such
that $0\leq n(i)<m(i)$ and $[\mathbb{B}^{m(i)}v]_{i}<[\mathbb{B}^{n(i)}v]_{i}$.

\end{enumerate}
\begin{lem}
\label{lem:bounded_longest_path}Suppose \ref{ass:sdd_and_wdd} and
\ref{ass:subidempotence}. Let $(P^{\ell})_{\ell\geq0}\coloneqq(w^{\ell},z^{\ell},\psi^{\ell})_{\ell\geq0}$
be a sequence in $\mathcal{P}$ and $v$ a solution of \eqref{eq:problem_combined}
satisfying
\[
-A(P^{\ell})v+b(P^{\ell})\rightarrow\sup_{P\in\mathcal{P}}\left\{ -A(P)v+b(P)\right\} =0.
\]
There exists $\ell_{0}\geq0$ such that for each $\ell\geq\ell_{0}$
and state $i$ with $\psi_{i}^{\ell}=1$, there exists a path in the
graph of $B(z^{\ell})$ from $i$ to some state $j(i,\ell)$ with
$\psi_{j(i,\ell)}^{\ell}=0$.
\end{lem}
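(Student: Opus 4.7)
The plan is a proof by contradiction. Suppose the conclusion fails; then there is an infinite set of indices $\ell$ and corresponding states $i_{\ell}$ with $\psi_{i_{\ell}}^{\ell}=1$ from which no directed path in the graph of $B(z^{\ell})$ reaches a $\psi^{\ell}=0$ state. Since both the state space $\{1,\ldots,M\}$ and $\{0,1\}^{M}$ are finite, the first step is to pass to a subsequence along which $i_{\ell}=i^{\ast}$, $\psi^{\ell}=\psi^{\ast}$, and the set $S_{\ell}$ of states reachable from $i^{\ast}$ in the graph of $B(z^{\ell})$ all coincide with a single $S\ni i^{\ast}$. By construction every $j\in S$ has $\psi_{j}^{\ast}=1$, and $[B(z^{\ell})]_{jk}=0$ whenever $j\in S$ and $k\notin S$, for every $\ell$ in the refined subsequence; in other words $S$ is closed under the graph of $B(z^{\ell})$ along the entire subsequence.

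Second, I exploit the hypothesis $-A(P^{\ell})v+b(P^{\ell})\to 0$ at states of $S$. For $j\in S$ the $j$-th row of $A(P^{\ell})$ equals $\delta(I-B(z^{\ell}))_{j,\cdot}$, so the convergence yields $[B(z^{\ell})v+k(z^{\ell})]_{j}\to v_{j}$. Sandwiching with $[B(z^{\ell})v+k(z^{\ell})]_{j}\le[\mathbb{B}v]_{j}$ and $[\mathbb{B}v]_{j}\le v_{j}$ (the Bellman equation at $j$, using $1\in\mathcal{D}_{j}$) gives
\[
[\mathbb{B}v]_{j}=v_{j}\qquad\text{for every }j\in S.
\]

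Third and most delicate, I promote this one-step identity to every iterate: $[\mathbb{B}^{n}v]_{j}=v_{j}$ for all $n\ge 0$ and $j\in S$, proved by induction on $n$. Write $w_{n}:=\mathbb{B}^{n}v$. The upper bound $w_{n}\le v$ comes from the global monotonicity of $\mathbb{B}$ (guaranteed by $B(z)\ge 0$ via \ref{ass:sdd_and_wdd}) iterated on $\mathbb{B}v\le v$. For the matching lower bound at $j\in S$, the inductive hypothesis supplies $w_{n}|_{S}=v|_{S}$, and the closure property $[B(z^{\ell})]_{jk}=0$ for $j\in S$, $k\notin S$ lets me rewrite the row sum as
\[
[B(z^{\ell})w_{n}+k(z^{\ell})]_{j}=[B(z^{\ell})v+k(z^{\ell})]_{j}\longrightarrow v_{j},
\]
so $[\mathbb{B}^{n+1}v]_{j}\ge v_{j}$. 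Applying the completed induction at $i^{\ast}$ makes the sequence $([\mathbb{B}^{n}v]_{i^{\ast}})_{n}$ identically equal to $v_{i^{\ast}}$, which contradicts \ref{ass:subidempotence}.

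The induction is the main obstacle. It succeeds precisely because the subsequence refinement simultaneously delivers $w_{n}|_{S}=v|_{S}$ and confines each control $z^{\ell}$ to act within $S$ from rows $j\in S$; together these allow the $B(z^{\ell})$ row sum on $w_{n}$ to be rewritten as the same row sum on $v$, after which the vanishing-residual hypothesis supplies the lower bound. The remaining ingredients (pigeonhole refinement, the Bellman sandwich, and global monotonicity of $\mathbb{B}$) are essentially routine given \ref{ass:sdd_and_wdd} and the finiteness of the combinatorial data.
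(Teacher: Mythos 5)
Your proof is correct and takes essentially the same route as the paper: pigeonhole extraction of a subsequence along which the violating state, the $\psi$-vector, and the reachable set are fixed; the sandwich $[B(z^{\ell})v+k(z^{\ell})]_j\to v_j$ combined with $\mathbb{B}v\le v$ to get $[\mathbb{B}v]_j=v_j$ on that set; and then the same closure-plus-monotonicity argument to propagate $[\mathbb{B}^{n}v]_{i^{\ast}}=v_{i^{\ast}}$ for all $n$, contradicting \ref{ass:subidempotence}. The only cosmetic difference is that the paper's pigeonhole fixes the entire graph of $B(z^{\ell_q})$ rather than just the reachable set, and the paper shows the step $[\mathbb{B}^{2}v]_r=[\mathbb{B}v]_r$ explicitly before appealing to "continuing the procedure," whereas you write out the induction in full; these are equivalent.
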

\begin{proof}
Suppose the contrary. A pigeonhole principle argument yields the existence
of a subsequence $(P^{\ell_{q}})_{q\geq0}\coloneqq(w^{\ell_{q}},z^{\ell_{q}},\psi^{\ell_{q}})_{q\geq0}$
of $(P^{\ell})_{\ell\geq0}$ such that \begin{itemize}\item $\psi^{\ell_{q}}=\psi$
is a constant independent of $q$; \item the graph of $B(z^{\ell_{q}})$
(call it $G$) is a constant independent of $q$; \item there exists
$i$ such that $\psi_{i}=1$ and for all $j$ reachable from $i$
(in $G$), $\psi_{j}=1$. \end{itemize} Let $V\coloneqq\{j_{1},\ldots,j_{k}\}$
be the states reachable from $i$. Let $r\in V\cup\{i\}$ be arbitrary.
Since the limit of a convergent sequence equals to the limit of any
of its subsequences,
\[
\left[B(z^{\ell_{q}})v+k(z^{\ell_{q}})\right]_{r}-v_{r}=\frac{1}{\delta}\left[-A(P^{\ell_{q}})v+b(P^{\ell_{q}})\right]_{r}\rightarrow0,
\]
and hence $[\mathbb{B}v]_{r}\geq v_{r}$. Now, since $v$ is a solution
of \eqref{eq:problem_combined}, it follows that $\mathbb{B}v\leq v$.
Therefore, $[\mathbb{B}v]_{r}=v_{r}$. Moreover,
\begin{multline*}
[\mathbb{B}^{2}v]_{r}=[\mathbb{B}(\mathbb{B}v)]_{r}\geq[B(z^{\ell_{q}})(\mathbb{B}v)+k(z^{\ell_{q}})]_{r}=\sum_{j\in V}[B(z^{\ell_{q}})]_{rj}[\mathbb{B}v]_{j}+[k(z^{\ell_{q}})]_{r}\\
=\sum_{j\in V}[B(z^{\ell_{q}})]_{rj}v_{j}+[k(z^{\ell_{q}})]_{r}=[B(z^{\ell_{q}})v+k(z^{\ell_{q}})]_{r}\rightarrow[\mathbb{B}v]_{r}
\end{multline*}
and hence $[\mathbb{B}^{2}v]_{r}\geq[\mathbb{B}v]_{r}$. Since $\mathbb{B}$
is a monotone operator by \ref{ass:sdd_and_wdd}, $\mathbb{B}v\leq v$
implies $\mathbb{B}^{2}v\leq\mathbb{B}v$, and hence $[\mathbb{B}^{2}v]_{r}=[\mathbb{B}v]_{r}$.
We can continue this procedure to obtain
\[
v_{r}=[\mathbb{B}v]_{r}=[\mathbb{B}^{2}v]_{r}=[\mathbb{B}^{3}v]_{r}=\cdots
\]
Setting $r=i$ in the above yields a contradiction to \ref{ass:subidempotence}.
\end{proof}

If we take the trivial path $i\rightarrow i$ as having length zero,
the proof above also implies that for $\ell$ sufficiently large and
for all $i$, there is a path of length $<m(i)$ (where $m(i)$ is
specified by \ref{ass:subidempotence}) in the graph of $B(z^{\ell})$
from $i$ to some state $j(i,\ell)$ with $\psi_{j(i,\ell)}^{\ell}=0$.
An example is given below:

\begin{example} \label{exa:subidempotence} Consider Example \ref{exa:optimal_control_of_markov_chain_with_vanishing_discount}
with $\mathcal{Z}_{i}=\mathcal{Z}_{j}$ for all states $i$ and $j$.
For all states $i$, let $k_{i}(z_{i})\coloneqq-C<0$. It follows
that for all $x$ in $\mathbb{R}^{M}$,
\[
\mathbb{B}^{2}x=\sup_{z,z^{\prime}\in\mathcal{Z}}\left\{ B(z)B(z^{\prime})x\right\} -2C<\sup_{z\in\mathcal{Z}}\left\{ B(z)x\right\} -C=\mathbb{B}x,
\]
so that \ref{ass:subidempotence} is satisfied with $1=n(i)<m(i)=2$
for all $i$. Intuitively, the controller pays twice the cost to apply
$\mathbb{B}$ twice.

In this case, denoting by $v$ a solution of \eqref{eq:problem_combined},
the control shown in \prettyref{fig:unidirectional_multiple_hops}
cannot correspond to some $P_{v}$ satisfying $-A(P_{v})v+b(P_{v})=\sup_{P\in\mathcal{P}}\{-A(P)v+b(P)\}=0$
since any path from $i>2$ to $j=1$ is of length at least $m(i)=2$.

\end{example}

We can now prove uniqueness independent of \ref{ass:path_to_sdd}:
\begin{thm}
\label{thm:uniqueness} Suppose \ref{ass:inverse_is_bounded}, \ref{ass:sdd_and_wdd},
and \ref{ass:subidempotence}. A solution of \eqref{eq:problem_combined}
is unique.
\end{thm}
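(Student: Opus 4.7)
The plan is to show $u \geq v$ for any two solutions $u,v$ of \eqref{eq:problem_combined}; the reverse inequality then follows by symmetry. The argument combines \prettyref{lem:bounded_longest_path} with the characterization in \prettyref{thm:no_loop_matrix_characterization} to produce a policy $P^{\ell}$ along which $A(P^{\ell})$ is an M-matrix, and then compares $u$ to $v$ via the nonnegativity of $A(P^{\ell})^{-1}$.

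Since \assargsup{} is not among the hypotheses, the first step is to construct a componentwise maximizing sequence. For each state $i$, row-decoupling implies that $[-A(P)v+b(P)]_{i}$ depends only on $P_{i}\in\mathcal{P}_{i}$ and has supremum $0$, so one may pick $P_{i}^{\ell}\in\mathcal{P}_{i}$ with $[-A(P^{\ell})v+b(P^{\ell})]_{i}\geq-1/\ell$ and set $P^{\ell}\coloneqq(P_{1}^{\ell},\ldots,P_{M}^{\ell})\in\mathcal{P}$. Writing $P^{\ell}=(w^{\ell},z^{\ell},\psi^{\ell})$ and $\epsilon^{\ell}\coloneqq-A(P^{\ell})v+b(P^{\ell})$, the vectors $\epsilon^{\ell}$ are nonpositive (because $v$ is a solution) and $\|\epsilon^{\ell}\|_{\infty}\to 0$.

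The second step is to apply \prettyref{lem:bounded_longest_path} to $(P^{\ell})$ and $v$, which is legitimate under \ref{ass:sdd_and_wdd} and \ref{ass:subidempotence}. For $\ell\geq\ell_{0}$ and each state $i$ with $\psi_{i}^{\ell}=1$, there is a path in the graph of $B(z^{\ell})$ from $i$ to some state with $\psi^{\ell}=0$. One then verifies that $A(P^{\ell})$ is a WCDD Z-matrix with positive diagonals: rows where $\psi_{i}^{\ell}=0$ equal rows of $I-L(w^{\ell})$ and are SDD with positive diagonal by \ref{ass:sdd_and_wdd}, while rows where $\psi_{i}^{\ell}=1$ equal $\delta$ times rows of $I-B(z^{\ell})$, are WDD, and have diagonal $\delta(1-[B(z^{\ell})]_{ii})>0$ because the presence of an outgoing $B(z^{\ell})$-edge forces $[B(z^{\ell})]_{ii}<1$. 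Truncating the $B(z^{\ell})$-path at its first $\psi^{\ell}=0$ vertex yields a path in the graph of $A(P^{\ell})$ to an SDD row, using that on the truncated portion every intermediate vertex has $\psi^{\ell}=1$ and hence $A(P^{\ell})$- and $B(z^{\ell})$-edges coincide. By \prettyref{thm:no_loop_matrix_characterization}, $A(P^{\ell})$ is a WDD M-matrix, so $A(P^{\ell})^{-1}\geq 0$; and by \ref{ass:inverse_is_bounded} the norms $\|A(P^{\ell})^{-1}\|$ are uniformly bounded for $\ell\geq\ell_{0}$.

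To finish, since $u$ is also a solution of \eqref{eq:problem_combined}, $-A(P^{\ell})u+b(P^{\ell})\leq 0$, and subtracting $\epsilon^{\ell}$ yields $A(P^{\ell})(u-v)\geq\epsilon^{\ell}$. Multiplying by $A(P^{\ell})^{-1}\geq 0$ gives $u-v\geq A(P^{\ell})^{-1}\epsilon^{\ell}$, whose right-hand side tends to zero by the uniform bound on $A(P^{\ell})^{-1}$ and $\|\epsilon^{\ell}\|_{\infty}\to 0$. Hence $u\geq v$, and interchanging the roles of $u$ and $v$ gives $u=v$. The main obstacle is the graph-theoretic bookkeeping in the third paragraph: correctly translating the $B(z^{\ell})$-path from \prettyref{lem:bounded_longest_path} into a WCDD certificate for $A(P^{\ell})$ (the edge correspondence on the truncated path and the exclusion of zero diagonals), together with the need to assemble a componentwise maximizing $P^{\ell}$ in the absence of \assargsup.
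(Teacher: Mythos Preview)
Your proof is correct and follows essentially the same route as the paper's: pick a near-maximizing sequence $(P^{\ell})$ for one solution, invoke \prettyref{lem:bounded_longest_path} together with \ref{ass:sdd_and_wdd} to conclude that $A(P^{\ell})$ is a WCDD Z-matrix with positive diagonals (hence an M-matrix via \prettyref{thm:no_loop_matrix_characterization}), and then compare the two solutions using nonnegativity of $A(P^{\ell})^{-1}$ and the uniform bound from \ref{ass:inverse_is_bounded}. You supply more detail than the paper does---in particular the explicit row-decoupled construction of $P^{\ell}$ in the absence of \assargsup{} and the graph-theoretic translation of the $B(z^{\ell})$-path into a WCDD certificate for $A(P^{\ell})$---but the underlying argument is the same.
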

\begin{proof}
Let $x$ and $y$ be two solutions and $(P^{\ell})_{\ell\geq0}$ be
a sequence in $\mathcal{P}$ such that
\[
-A(P^{\ell})y+b(P^{\ell})\rightarrow\sup_{P\in\mathcal{P}}\left\{ -A(P)y+b(P)\right\} =0.
\]
It follows from \ref{ass:sdd_and_wdd}, \ref{ass:subidempotence},
and \prettyref{lem:bounded_longest_path} that we can, without loss
of generality, assume $A(P^{\ell})$ is a WCDD Z-matrix with positive
diagonals, and hence an M-matrix by \prettyref{thm:no_loop_matrix_characterization}.
For some sequence $(\epsilon^{\ell})_{\ell\geq0}$ in $\mathbb{R}^{M}$
with $\epsilon^{\ell}\rightarrow0$, we can write
\[
-A(P^{\ell})y+b(P^{\ell})+\epsilon^{\ell}=0=\sup_{P\in\mathcal{P}}\left\{ -A(P)x+b(P)\right\} \geq-A(P^{\ell})x+b(P^{\ell}),
\]
so that $A(P^{\ell})(x-y)\geq-\epsilon^{\ell}$. Since the inverse
of a monotone matrix has nonnegative elements and $P\mapsto A(P)^{-1}$
is bounded by \ref{ass:inverse_is_bounded}, $x-y\geq0$. Similarly,
$y-x\geq0$.
\end{proof}

Unfortunately, the conditions of \prettyref{thm:uniqueness} cannot
guarantee that the iterates $(v^{\ell})_{\ell\geq1}$ given by policy
iteration are well-defined, as $A(P^{\ell})$ may be singular for
some $\ell\geq1$. This is demonstrated in the following example,
for which \ref{ass:path_to_sdd} does not hold:

\begin{example}[Failure of policy iteration] \label{exa:policy_iteration_fails}
Consider Example \ref{exa:optimal_control_of_markov_chain_with_vanishing_discount}.
For all states $i$, let $\mathcal{Z}_{i}\coloneqq\{e^{j}\}_{j=1}^{M}$
be the set of standard basis vectors and
\[
k_{i}(z_{i})\coloneqq-C-\sum_{j}z_{ij}\left|i-j\right|<0\text{ where }C>0.
\]
As in Example \ref{exa:subidempotence}, \ref{ass:subidempotence}
is satisfied due to the fixed cost $C$.

Let $\delta\coloneqq1$. Suppose there exists a state $r$ with $1\in\mathcal{D}_{r}$
and $c_{r}(w_{r})<-C$ for all controls $w$ in $\mathcal{W}$. It
is readily verified that $\proc{Policy-Iteration}$ initialized with
the zero vector $v^{0}\coloneqq0$ picks $P^{1}\coloneqq(w^{1},z^{1},\psi^{1})$
with $z_{r}^{1}=e^{r}$ and $\psi_{r}^{1}=1$. It follows that
\[
[A(P^{1})]_{rj}=[I-B(z^{1})]_{rj}=[I]_{rj}-[I]_{rj}=0\text{ for all }j
\]
so that $A(P^{1})$ is singular, and hence $v^{1}$ is undefined.

\end{example}

For any $\ell\geq1$, it is possible to construct more complicated
examples in which the matrices $A(P^{1}),\ldots,A(P^{\ell-1})$ are
nonsingular while $A(P^{\ell})$ is singular. That is, policy iteration
can fail at any iterate.

\subsection{Policy iteration on a modified problem}

As demonstrated in the previous section, if \ref{ass:path_to_sdd}
is not satisfied, policy iteration may fail. We may, however, hope
to construct a solution by performing policy iteration on a ``modified
problem'' with control set $\mathcal{P}^{\prime}$ obtained by removing
controls $P$ in $\mathcal{P}$ that render $A(P)$ singular.

We define \ref{ass:bounded_and_argsup}$^{\prime}$ by replacing all
occurrences of $\mathcal{P}$ with $\mathcal{P}^{\prime}$ in the
definition of \ref{ass:bounded_and_argsup}. \ref{ass:path_to_sdd}$^{\prime}$
and \ref{ass:sdd_and_wdd}$^{\prime}$ are defined similarly. We can
now state the above idea precisely:
\begin{thm}
\label{thm:the_standard_machinery} Let $\mathcal{P}^{\prime}\coloneqq{\textstyle \prod_{i=1}^{M}}\mathcal{P}_{i}^{\prime}$
where each $\mathcal{P}_{i}^{\prime}\subset\mathcal{P}_{i}$ is nonempty.
Suppose \ref{ass:inverse_is_bounded}, \ref{ass:bounded_and_argsup}$^{\prime}$,
\ref{ass:path_to_sdd}$^{\prime}$, \ref{ass:sdd_and_wdd}, \ref{ass:subidempotence},
and
\begin{equation}
\text{for all }v\text{ in }\mathbb{R}^{M},\text{ }\sup_{P\in\mathcal{P}^{\prime}}\left\{ -A(P)v+b(P)\right\} =0\implies\sup_{P\in\mathcal{P}}\left\{ -A(P)v+b(P)\right\} =0.\label{eq:modified_solves_original}
\end{equation}
$(v^{\ell})_{\ell\geq1}$ defined by $\proc{Policy-Iteration}(\mathcal{P}^{\prime},A(\cdot),b(\cdot),v^{0})$
is nondecreasing and converges to the unique solution of \eqref{eq:problem_combined}.
Moreover, if $\mathcal{P}^{\prime}$ is finite, convergence occurs
in at most $|\mathcal{P}^{\prime}|$ iterations.
\end{thm}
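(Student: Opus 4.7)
The plan is a two-step reduction: first apply Theorem \ref{thm:fixed_point_problem} to the \emph{modified} problem with control set $\mathcal{P}^{\prime}$ in place of $\mathcal{P}$, obtaining both the well-defined execution and the convergence of $\proc{Policy-Iteration}(\mathcal{P}^{\prime},A(\cdot),b(\cdot),v^{0})$ to the unique solution $v$ of $\sup_{P\in\mathcal{P}^{\prime}}\{-A(P)v+b(P)\}=0$. Then, using the bridging hypothesis \eqref{eq:modified_solves_original} together with Theorem \ref{thm:uniqueness}, I would upgrade $v$ to \emph{the} unique solution of the original problem \eqref{eq:problem_combined}.

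For the first step, I need to check that the hypotheses of Theorem \ref{thm:fixed_point_problem} hold on $\mathcal{P}^{\prime}$. Assumption \ref{ass:inverse_is_bounded} is posed as a uniform bound over $\{P\in\mathcal{P}:A(P)\text{ nonsingular}\}$, and since $\mathcal{P}^{\prime}\subset\mathcal{P}$ the same bound holds on the corresponding subset of $\mathcal{P}^{\prime}$. Assumptions \ref{ass:bounded_and_argsup}$^{\prime}$ and \ref{ass:path_to_sdd}$^{\prime}$ are given outright. Finally, \ref{ass:sdd_and_wdd} quantifies over all $P\in\mathcal{P}$, so it immediately restricts to all $P\in\mathcal{P}^{\prime}$, which is precisely \ref{ass:sdd_and_wdd}$^{\prime}$. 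Theorem \ref{thm:fixed_point_problem} applied to the modified problem then delivers exactly the monotone-convergence and finite-termination conclusions claimed in the theorem statement, with limit the unique $v$ satisfying the modified Bellman equation.

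For the second step, \eqref{eq:modified_solves_original} turns the limit $v$ of the modified iteration into a solution of \eqref{eq:problem_combined}. To see that this is the \emph{unique} solution of the original problem, I would invoke Theorem \ref{thm:uniqueness}, whose three hypotheses \ref{ass:inverse_is_bounded}, \ref{ass:sdd_and_wdd}, and \ref{ass:subidempotence} are all among the hypotheses of the present theorem. Combining the two steps yields all of the assertions: the sequence $(v^{\ell})_{\ell\geq 1}$ produced by $\proc{Policy-Iteration}(\mathcal{P}^{\prime},A(\cdot),b(\cdot),v^{0})$ is nondecreasing, converges to the unique solution of \eqref{eq:problem_combined}, and terminates in at most $|\mathcal{P}^{\prime}|$ steps when $\mathcal{P}^{\prime}$ is finite.

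The only subtlety—and the main conceptual point—is that Theorem \ref{thm:fixed_point_problem} on its own guarantees uniqueness only for the modified Bellman equation, which is a weaker statement than uniqueness for the original problem; the gap between these two notions of uniqueness is exactly what is closed by the subidempotence assumption \ref{ass:subidempotence} via Theorem \ref{thm:uniqueness}. Once that bridge is in place, the verification of the remaining hypotheses is essentially bookkeeping, since the primed assumptions and the fact that $\mathcal{P}^{\prime}\subset\mathcal{P}$ make all structural properties of $A(P)$ and $b(P)$ transfer to the modified problem without modification.
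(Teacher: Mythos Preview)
Your proposal is correct and follows essentially the same two-step argument as the paper: first invoke Theorem~\ref{thm:fixed_point_problem} on the modified problem (using that $\mathcal{P}'\subset\mathcal{P}$ to inherit \ref{ass:inverse_is_bounded}$'$ and \ref{ass:sdd_and_wdd}$'$, together with the assumed \ref{ass:bounded_and_argsup}$'$ and \ref{ass:path_to_sdd}$'$), then use \eqref{eq:modified_solves_original} and Theorem~\ref{thm:uniqueness} to identify the limit as the unique solution of \eqref{eq:problem_combined}. Your closing remark on the role of \ref{ass:subidempotence} in bridging modified and original uniqueness is an accurate reading of the logic.
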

\begin{proof}
Since $\mathcal{P}^{\prime}\subset\mathcal{P}$, it follows immediately
that \ref{ass:inverse_is_bounded}$^{\prime}$ and \ref{ass:sdd_and_wdd}$^{\prime}$
are satisfied, so that by \prettyref{thm:fixed_point_problem}, $(v^{\ell})_{\ell\geq1}$
is well-defined and converges to the unique solution $v$ of the modified
problem. That is, $v^{\ell}\rightarrow v$ and $\sup_{P\in\mathcal{P}^{\prime}}\left\{ -A(P)v+b(P)\right\} =0$.
By \eqref{eq:modified_solves_original}, $v$ solves the original
problem \eqref{eq:problem_combined}. Since solutions to \eqref{eq:problem_combined}
are unique by \prettyref{thm:uniqueness}, the desired result follows.
\end{proof}

We now give a nontrivial example (in the sense that \ref{ass:path_to_sdd}
fails) for which we can apply \prettyref{thm:the_standard_machinery}:

\begin{example} \label{exa:the_standard_machinery} Consider Example
\ref{exa:optimal_control_of_markov_chain_with_vanishing_discount}.
Let (i) $\mathcal{Z}$ and $k$ be given as in Example \ref{exa:policy_iteration_fails},
(ii) \label{itm:three_point_stencil} $\mathcal{W}_{i}$ be the set
of all $M$-dimensional probability vectors $w_{i}$ with $w_{ij}=0$
whenever $|i-j|>1$, (iii) $c$ be continuous and bounded, and (iv)
$\overline{c}\coloneqq\max_{w\in\mathcal{W}}c(w)$ with $\overline{c}_{i-1}\geq\overline{c}_{i}$
for all $1<i\leq M$. Let $\mathcal{P}^{\prime}$ be all $P\coloneqq(w,z,\psi)$
in $\mathcal{P}$ satisfying \eqref{eq:unidirectional}. Then, the
conditions of \prettyref{thm:the_standard_machinery} are satisfied.

\end{example}
\begin{proof}
It is straightforward to verify \ref{ass:inverse_is_bounded}, \ref{ass:bounded_and_argsup}$^{\prime}$,
\ref{ass:path_to_sdd}$^{\prime}$, \ref{ass:sdd_and_wdd}, and \ref{ass:subidempotence}.
Thus, it is sufficient to show \eqref{eq:modified_solves_original}.
We write $\mathcal{P}_{i}^{\prime}\coloneqq\mathcal{W}_{i}\times\mathcal{Z}_{i}^{\prime}\times\mathcal{D}_{i}^{\prime}$
and define $[\mathbb{B}^{\prime}x]_{i}$ for $i>1$ by replacing $\mathcal{Z}$
with $\mathcal{Z}^{\prime}\coloneqq\prod_{i=1}^{M}\mathcal{Z}_{i}^{\prime}$
in \eqref{eq:fixed_point_problem_operators}.

We first show that the solution $v$ to the modified problem is nonincreasing:
\[
v_{i-1}\geq v_{i}\text{ for all }1<i\leq M.
\]
Suppose the contrary. Let $r>1$ be the minimal element such that
$v_{r-1}<v_{r}$. If $v_{r}=[\mathbb{B}^{\prime}v]_{r}$, then $v_{r}=v_{j}-C-|r-j|$
for some $j<r$. Either $j=r-1$ or
\[
v_{r-1}\geq\left[\mathbb{B}^{\prime}v\right]_{r-1}\geq v_{j}-C-\left|\left(r-1\right)-j\right|\geq v_{r}
\]
(both are contradictions). It follows that $v_{r}=[\mathbb{L}v]_{r}$.
Letting $w_{0j},v_{0}\coloneqq0$ for notational convenience, assumption
(ii) implies
\[
v_{r-1}\geq\left[\mathbb{L}v\right]_{r-1}=\max_{w\in\mathcal{W}}\left\{ \sum_{j=r-2}^{r}\frac{w_{r-1,j}}{1+\rho}v_{j}+\left[c(w)\right]_{r-1}\right\} \geq\frac{v_{r-1}}{1+\rho}+\overline{c}_{r-1}
\]
so that $v_{r-1}\geq(1+1/\rho)\overline{c}_{r-1}$. If $v_{r}\geq v_{r+1}$,
it follows similarly from $v_{r}=[\mathbb{L}v]_{r}$ that $v_{r}\leq(1+1/\rho)\overline{c}_{r}$
so that $v_{r-1}\geq v_{r}$ (a contradiction) and hence it must be
the case that $v_{r}<v_{r+1}$. We can repeat this argument inductively
to arrive at the contradiction
\[
v_{r-1}<v_{r}<\cdots<v_{M}\text{ and }v_{r-1}\geq(1+1/\rho)\overline{c}_{r-1}\geq(1+1/\rho)\overline{c}_{M}\geq v_{M}.
\]

Since $v$ is nonincreasing, $v\geq\mathbb{B}v$ (it is suboptimal
to take $\psi_{i}=1$ and $z_{ij}=1$ for states $i$ and $j$ with
$j\geq i$), and hence \eqref{eq:modified_solves_original} holds.
\end{proof}

\section{Numerical schemes for the HJBQVI problem\label{sec:numerical_schemes}}

All numerical schemes herein are on a rectilinear grid
\[
\left\{ t^{1},\ldots,t^{N}\right\} \times\left\{ x_{1}^{1},x_{2}^{1},\ldots\right\} \times\cdots\times\left\{ x_{1}^{d},x_{2}^{d},\ldots\right\}
\]
where $0\eqqcolon t^{1}<\cdots<t^{N}\coloneqq T$ and $x_{1}^{j}<x_{2}^{j}<\cdots$
for all $j$. Multi-indices are used (i.e. $x_{i}\coloneqq(x_{i_{1}},\ldots,x_{i_{d}})$).
$M$ denotes the number of spatial points $x_{i}$. For functions
$q\coloneqq q(t,x)$ defined on $[0,T]\times\mathbb{R}^{d}$, the
shorthands $q_{i}^{n}\coloneqq q(t^{n},x_{i})$ and $q^{n}(x)\coloneqq q(t^{n},x)$
are employed. In the absence of ambiguity, we use $q^{n}$ to denote
the vector with components $q_{i}^{n}$ and take $\Delta t\coloneqq t^{n+1}-t^{n}$.
It is understood that $\max_{n}\{t^{n+1}-t^{n}\}\rightarrow0$ and
$\max_{i}\{x_{i+1}-x_{i}\}\rightarrow0$ as $h\rightarrow0$, where
$h$ denotes a ``global'' discretization parameter that controls
the coarseness of the grid.

Control sets $W$ and $Z(t,x)$ are approximated by finite sets $\emptyset\neq W_{h}\subset W$
and $Z_{h}(t,x)\subset Z(t,x)$. The reader concerned with consistency
should impose some regularity to justify this approximation, such
as: (i) $W$ is compact, (ii) $Z$ is everywhere compact and continuous
with respect to the Hausdorff metric, and (iii) $\max_{w\in W}\min_{w_{h}\in W_{h}}\left|w-w_{h}\right|\rightarrow0$
as the discretization parameter $h\rightarrow0$ along with an identical
pointwise condition for $Z$ and $Z_{h}$.

The discretized impulse operator \eqref{eq:hjbqvi_intervention_operator}
is
\[
\left[\mathcal{M}_{h}^{n}u^{n}\right]_{i}\coloneqq\max_{z\in(Z_{h})_{i}^{n}}\left\{ u^{n}\left\llbracket x_{i}+\Gamma_{i}^{n}(z)\right\rrbracket +K_{i}^{n}(z)\right\}
\]
where $\varphi\llbracket x\rrbracket$ denotes linear interpolation
using the value of $\varphi$ on grid nodes. It is understood that
controls $z$ that cause $x_{i}+\Gamma_{i}^{n}(z)$ to exit the numerical
grid are not included in $(Z_{h})_{i}^{n}$. We use $\text{\L}_{h}^{n}(w)$
to denote a consistent discretization of $\text{\L}^{w}$ with coefficients
frozen at $t=t^{n}$.

Recall that in \eqref{eq:hjbqvi_pde_and_boundary}, $\Lambda\subset\partial\Omega$
is a special subset of the boundary at which a Dirichlet-like condition
is applied. To distinguish points, we denote by $\Phi$ a diagonal
matrix satisfying $[\Phi]_{ii}=0$ whenever $x_{i}$ is in $\Lambda$
and $[\Phi]_{ii}=1$ otherwise.

Since the Dirichlet-like condition is imposed at the final time $t=T$,
the numerical method proceeds backwards in time (i.e. from $t^{n+1}$
to $t^{n}$). More precisely, letting $u_{i}^{N}\coloneqq g_{i}^{N}$,
the numerical solution $u^{n}$ at timestep $1\leq n<N$ produced
by each scheme (given the solution at the previous timestep, $u^{n+1}$)
is written as a solution of \eqref{eq:bellman_problem} with $A$
and $b$ picked appropriately. Control sets are given by \eqref{eq:fixed_point_problem_control_set}
and
\begin{equation}
\mathcal{W}_{i}\coloneqq W_{h}\text{, }\mathcal{Z}_{i}\coloneqq\begin{cases}
(Z_{h})_{i}^{n_{0}} & \text{if }(Z_{h})_{i}^{n_{0}}\neq\emptyset\\
\left\{ \emptyset\right\}  & \text{otherwise}
\end{cases}\text{, and }\mathcal{D}_{i}\coloneqq\begin{cases}
\left\{ 0,1\right\}  & \text{if }(Z_{h})_{i}^{n_{0}}\neq\emptyset\\
\left\{ 0\right\}  & \text{otherwise}
\end{cases}\label{eq:control_set_for_numerical_schemes}
\end{equation}
where $n_{0}$ is $n+1$ for the semi-Lagrangian scheme (see $\S$\ref{subsec:explicit_control})
and $n$ otherwise. As a technical detail, we take $\mathcal{Z}_{i}$
to be a nonempty set (we choose $\{\emptyset\}$ arbitrarily) whenever
$(Z_{h})_{i}^{n_{0}}$ is empty to ensure that the product $\mathcal{W}_{i}\times\mathcal{Z}_{i}\times\mathcal{D}_{i}$
of \eqref{eq:fixed_point_problem_control_set} is nonempty.

We make the following assumptions:

\begin{enumerate}[label=(A\arabic{enumi}),ref=(A\arabic{enumi}),start=0]

\item \label{ass:finite_control_set}$\mathcal{W}\coloneqq\prod_{i=1}^{M}\mathcal{W}_{i}$
and $\mathcal{Z}\coloneqq\prod_{i=1}^{M}\mathcal{Z}_{i}$ are finite.

\item \label{ass:discrete_degenerate_elliptic_operator_is_wdd} For
all $w$ in $\mathcal{W}$, $-\text{\L}_{h}^{n}(w)$ is a WDD Z-matrix
with nonnegative diagonals.

\item \label{ass:discrete_intervention_operator_is_markov_matrix}
For all $z$ in $\mathcal{Z}$, $B^{n}(z)$ is a right stochastic
(a.k.a. Markov) matrix with $[B^{n}(z)v]_{i}=v\llbracket x_{i}+\Gamma_{i}^{n}(z_{i})\rrbracket$.

\item \label{ass:scheme_constants} $\rho\geq0$ and $\delta,\epsilon>0$.

\end{enumerate} Since \ref{ass:finite_control_set} ensures that
$\mathcal{P}$ is finite, all schemes in the sequel satisfy \ref{ass:inverse_is_bounded}
and \ref{ass:bounded_and_argsup}.

\begin{rem}Barles and Souganidis \cite{barles1991convergence} prove
that a numerical scheme converges to the unique viscosity solution
of a fully nonlinear second order equation (such as \eqref{eq:hjbqvi_pde_and_boundary})
satisfying a comparison result\emph{ }if it is monotone in the viscosity
sense, \emph{$\ell_{\infty}$ }stable, and consistent. Comparison
results for the HJBQVI \eqref{eq:hjbqvi_pde_and_boundary} are provided
in \cite[Theorem 5.11]{seydel2010general}. \ref{ass:discrete_degenerate_elliptic_operator_is_wdd}
and \ref{ass:discrete_intervention_operator_is_markov_matrix} ensure
monotonicity (see \cite[Section 1.3]{oberman2006convergent} for an
example of a stable nonmonotone scheme that fails to converge). For
brevity, we do not give proofs of consistency or discuss stability
here.\end{rem}

\subsection{Direct control\label{subsec:direct_control}}

In a direct control formulation, either the generator ($\sup_{w\in W}\{\partial u/\partial t+\text{\L}^{w}u-\rho u+f^{w}\}$)
or impulse ($\mathcal{M}u-u$) component is active at any grid point.
Since these have different units, comparing them in floating point
arithmetic requires a scaling factor $\delta>0$ to ensure fast convergence
\cite{huang2013inexact} (see also \prettyref{lem:invariance}). Scaling
by $\delta$ and discretizing \eqref{eq:hjbqvi_pde_and_boundary}
(ignoring boundary conditions) yields
\[
\max\biggl(\max_{w\in W_{h}}\left\{ \frac{u_{i}^{n+1}-u_{i}^{n}}{\Delta t}+\left[\text{\L}_{h}^{n}(w)u^{n}\right]_{i}-\rho u_{i}^{n}+f_{i}^{n}(w)\right\} ,\delta\left(\left[\mathcal{M}_{h}^{n}u^{n}\right]_{i}-u_{i}^{n}\right)\biggr)=0.
\]
Including boundary conditions, this is put in the form of \eqref{eq:problem_combined}
by taking
\begin{align}
L(w) & \coloneqq\Phi\left(\text{\L}_{h}^{n}(w)-\rho I\right)\Delta t; & c(w) & \coloneqq\Phi\left(u^{n+1}+f^{n}(w)\Delta t\right)+\left(I-\Phi\right)g^{n};\nonumber \\
B(z) & \coloneqq B^{n}(z); & k(z) & \coloneqq K^{n}(z).\label{eq:direct_control_system}
\end{align}
With $B$ and $k$ given above, the operator $\mathcal{M}_{h}^{n}$
is equivalent to $\mathbb{B}$ defined in \eqref{eq:fixed_point_problem_operators}.

$L$ and $B$ given above satisfy \ref{ass:sdd_and_wdd} due to \ref{ass:discrete_degenerate_elliptic_operator_is_wdd}\textendash \ref{ass:scheme_constants}.
Therefore, \ref{ass:subidempotence} is a sufficient condition for
uniqueness of solutions (\prettyref{thm:uniqueness}). Similarly,
\ref{ass:path_to_sdd} is a sufficient condition for convergence of
the corresponding policy iteration (\prettyref{thm:fixed_point_problem}).

\subsection{Penalized\label{subsec:penalized}}

A penalized formulation (treated in detail in \cite{witte2012penalty})
imposes a penalty scaled by $1/\epsilon^{\prime}\gg0$ whenever $\mathcal{M}u>u$.
The scheme is given by:
\[
\max_{w\in W_{h}}\left\{ \frac{u_{i}^{n+1}-u_{i}^{n}}{\Delta t}+\left[\text{\L}_{h}^{n}(w)u^{n}\right]_{i}-\rho u_{i}^{n}+f_{i}^{n}(w)\right\} +\max\left(\left[\mathcal{M}_{h}^{n}u^{n}\right]_{i}-u_{i}^{n},0\right)/\epsilon^{\prime}=0.
\]
For simplicity, we take $\epsilon^{\prime}\coloneqq\epsilon\Delta t$
for some $\epsilon>0$. Including boundary conditions, this is put
in the form \eqref{eq:bellman_problem} by taking
\begin{align*}
A(P) & \coloneqq I+\Phi\left(\rho I-\text{\L}_{h}^{n}(w)\right)\Delta t+\Psi\left(I-B^{n}(z)\right)/\epsilon;\\
b(P) & \coloneqq\Phi\left(u^{n+1}+f^{n}(w)\Delta t\right)+\left(I-\Phi\right)g^{n}+\Psi K^{n}(z)/\epsilon.
\end{align*}
Convergence of the corresponding policy iteration is trivial since
$A(P)$ is an SDD Z-matrix with positive diagonals (by virtue of \ref{ass:discrete_degenerate_elliptic_operator_is_wdd}\textendash \ref{ass:scheme_constants}),
and hence an M-matrix.

\subsection{Semi-Lagrangian\label{subsec:explicit_control}}

The crux of a semi-Lagrangian scheme is the use of a \emph{Lagrangian
derivative} to remove the $D_{x}$ coefficient's dependency on the
control $w$. It is assumed that (i) $\sigma$ is independent of the
control and (ii) the drift $\mu$ and forcing term $f$ can be split
into (sufficiently regular) controlled and uncontrolled components:
\[
\mu(y,w)=\hat{\mu}(y)+\doublehat{\mu}(y,w)\text{ and }f(y,w)=\hat{f}(y)+\doublehat{f}(y,w).
\]
We now give some intuition behind a semi-Lagrangian scheme. Consider
a generator $\hat{\text{\L}}$ corresponding to an uncontrolled SDE:
\[
\hat{\text{\L}}u(y)\coloneqq\text{\L}(w)u(y)-\left\langle \doublehat{\mu}(y,w),D_{x}u(y)\right\rangle .
\]
Letting $X\coloneqq X(t)$ denote a $d$-dimensional trajectory satisfying
\[
X(t^{n})=x_{i}\text{ and }dX(t)=\doublehat{\mu}(t,X(t),w)dt\text{ on }(t^{n},t^{n+1}]
\]
so that $X(t^{n+1})\approx X(t^{n})+\doublehat{\mu}(t^{n},X(t^{n}),w)\Delta t=x_{i}+\doublehat{\mu}_{i}^{n}(w)\Delta t$,
we define the Lagrangian derivative\emph{ }with respect to $X$ as
\[
\frac{Du}{Dt}(t,X(t),w)\coloneqq\frac{\partial}{\partial t}\left[u(t,X(t))\right]=\frac{\partial u}{\partial t}(t,X(t))+\left\langle \doublehat{\mu}(t,X(t),w),D_{x}u(t,X(t))\right\rangle .
\]
Ignoring boundary conditions, we substitute ${\displaystyle \frac{Du}{Dt}}$
into \eqref{eq:hjbqvi_pde_and_boundary} to get
\[
\max\left(\sup_{w\in W}\left\{ \frac{Du}{Dt}^{w}+\hat{\text{\L}}u-\rho u+f^{w}\right\} ,\mathcal{M}u-u\right)=0.
\]
A discretization of the above is
\begin{multline*}
\max\left(\max_{w\in W_{h}}\left\{ u^{n+1}\left\llbracket x_{i}+\doublehat{\mu}_{i}^{n}(w)\Delta t\right\rrbracket +\doublehat{f}_{i}^{n+1}(w)\Delta t\right\} ,\left[\mathcal{M}_{h}^{n+1}u^{n+1}\right]_{i}\right)\\
-u_{i}^{n}+\left(\left[\hat{\text{\L}}_{h}^{n}u^{n}\right]_{i}-\rho u_{i}^{n}+\hat{f}_{i}^{n}\right)\Delta t=0.
\end{multline*}
It is understood that controls $w$ that cause $x_{i}+\doublehat{\mu}_{i}^{n}(w)\Delta t$
to exit the numerical grid are not considered at node $i$. Consistency
of this scheme (subject to some mild assumptions) can be shown similarly
to \cite[Lemma 6.6]{chen2008numerical}.

In lieu of \ref{ass:discrete_degenerate_elliptic_operator_is_wdd},
we assume:

\begin{enumerate}[label=(A\arabic{enumi}$^\prime$),ref=(A\arabic{enumi}$^\prime$),start=1]

\item \label{ass:explicit_control_scheme_wdd} $-\hat{\text{\L}}_{h}^{n}$
is a WDD Z-matrix with nonnegative diagonals.

\end{enumerate} Let $\vec{x}$ denote a vector with components $x_{i}$.
Including boundary conditions, this is put in the form \eqref{eq:bellman_problem}
by taking
\begin{align*}
A & \coloneqq I+\Phi\left(\rho I-\hat{\text{\L}}_{h}^{n}\right)\Delta t;\\
b(P) & \coloneqq\Phi\left(\hat{f}^{n}\Delta t+\left(I-\Psi\right)\left(u^{n+1}\left\llbracket \vec{x}+\doublehat{\mu}^{n}(w)\Delta t\right\rrbracket +\doublehat{f}^{n+1}(w)\Delta t\right)\right)\\
 & \hspace{14em}+\left(I-\Phi\right)g^{n}+\Psi\left(B^{n+1}(z)u^{n+1}+K^{n+1}(z)\right).
\end{align*}
Since $A$ is independent of $P$, \eqref{eq:bellman_problem} becomes
$Av=\max_{P\in\mathcal{P}}\{b(P)\}$; no iterative method is required.
$A$ is nonsingular since it is SDD (by virtue of \ref{ass:explicit_control_scheme_wdd}
and \ref{ass:scheme_constants}).

\section{Examples\label{sec:examples}}

The remainder of this work focuses on numerical examples.

\subsection{Optimal combined control of the exchange rate\label{subsec:exchange_rate}}

The following is studied in \cite{mundaca1998optimal,cadenillas1999optimal}.
Consider a government able to influence the foreign exchange (FEX)
rate of its currency by:
\begin{itemize}
\item choosing the domestic interest rate \textbf{(stochastic control)};
\item buying or selling foreign currency \textbf{(impulse control)}.
\end{itemize}
Let $(r_{t})_{t\geq0}$ denote the domestic interest rate process
and $\overline{r}$ the foreign interest rate. At any point in time,
the government can buy ($z>0$) or sell ($z<0$) foreign currency
to influence the FEX market. $(X_{t})_{t\geq0}$, the log of the FEX
rate, follows
\begin{align*}
dX_{t} & =-a\left(r_{t}-\overline{r}\right)dt+\sigma d\mathfrak{W}_{t} & \text{ if }\tau_{j}<t<\tau_{j+1} &  & \text{(stochastic control)};\\
X_{\tau_{j+1}} & =X_{\tau_{j+1}-}+z_{\tau_{j+1}} &  &  & \text{(impulse control)}.
\end{align*}
$(\mathfrak{W}_{t})_{t\geq0}$ is a standard Brownian motion. $a>0$
parameterizes the effect of the interest rate differential, $w_{t}\coloneqq r_{t}-\overline{r}$,
on the FEX rate.

Let $\theta\coloneqq(w,\tau_{1},\tau_{2},\ldots,z_{\tau_{1}},z_{\tau_{2}},\ldots)$
where (i) $(w_{t})_{t\geq0}$ is an adapted process, (ii) $\tau_{1},\tau_{2},\ldots$
are stopping times with $0\eqqcolon\tau_{0}\leq\tau_{1}\leq\tau_{2}\leq\ldots\leq T$,
and (iii) $z_{\tau_{k}}$ is a $\tau_{k}$-measurable random variable
taking values from some set $Z(\tau_{k},X_{\tau_{k}})$. Any such
$\theta$ satisfying these properties is referred to as a \emph{combined
control}.

A combined control is \emph{admissible }if at all times $t$, $w_{\text{min}}\leq w_{t}\leq w_{\text{max}}$
(alternatively, we could impose this up to null sets). Let $\Theta$
denote the set of all admissible controls. The optimal cost at time
$t$ when $X_{t}=x$ is given by
\begin{equation}
u(t,x)\coloneqq e^{\rho t}\sup_{\theta\in\Theta}\mathbb{E}^{(t,x)}\left[-\int_{t}^{T}e^{-\rho s}\left(p(X_{s})+bw_{s}^{2}\right)ds-\sum_{\tau_{j}\leq T}e^{-\rho\tau_{j}}\left(\lambda\left|z_{\tau_{j}}\right|+C\right)\right].\label{eq:exchange_rate_value}
\end{equation}
The cost of the distance of the FEX rate to the optimal parity $x^{\star}$
is parameterized by the function $p$. We take $p(x)\coloneqq(\max(x-x^{\star},0))^{2}$.
The constant $b\geq0$ parameterizes the cost associated with a nonzero
interest rate differential. $\lambda\geq0$ and $C>0$ parameterize
the cost of an impulse. $\rho\geq0$ is a discount factor.

It is well-known \cite{bensoussan1984impulse} that the dynamic programming
equation associated to \eqref{eq:exchange_rate_value} is the HJBQVI
on $\Omega\coloneqq\mathbb{R}$ and $\Lambda\coloneqq\emptyset$ given
by \eqref{eq:hjbqvi_pde_and_boundary} with $g(T,x)\coloneqq0$ and
\begin{table}
\begin{centering}
{\footnotesize%
\begin{tabular}{rrrl}
\toprule
\multicolumn{2}{r}{Parameter} & \multicolumn{2}{l}{Value}\tabularnewline
\midrule
Discount factor & $\rho$ & 2\% & per annum\tabularnewline
Volatility & $\sigma$ & 30\% & per annum\tabularnewline
Expiry & $T$ & 10 & years\tabularnewline
Optimal parity & $x^{\star}$ & 0 & \tabularnewline
Interest rate differential & $W$ & 0-7\% & per annum\tabularnewline
Interest rate differential effect & $a$ & 0.25 & \tabularnewline
Interest rate differential cost & $b$ & 3 & \tabularnewline
Scaled transaction cost & $\lambda$ & 1 & \tabularnewline
Fixed transaction cost & $C$ & 0.1 & \tabularnewline
\bottomrule
\end{tabular}}
\par\end{centering}
\caption{Optimal combined control of the exchange rate: parameters\label{tab:exchange_rate_parameters}}
\end{table}
\begin{align*}
W & \coloneqq[w_{\text{min}},w_{\text{max}}]; & Z(t,x) & \coloneqq\mathbb{R};\\
\text{\L}(t,x,w) & \coloneqq-aw\frac{\partial u}{\partial x}+\frac{1}{2}\sigma^{2}\frac{\partial^{2}u}{\partial x^{2}}; & \Gamma(t,x,z) & \coloneqq z;\\
f(t,x,w) & \coloneqq-p(x)-bw^{2}; & K(t,x,z) & \coloneqq-\lambda|z|-C.
\end{align*}

\subsubsection{Convergence of the direct control scheme\label{subsec:exchange_rate_direct_control}}

Discretization requires that we truncate $[0,T]\times\mathbb{R}$
to $[0,T]\times[x_{1},x_{M}]$ and $Z(t,x)=\mathbb{R}$ to $[x_{1},x_{M}]-x$
so that the exchange rate after an impulse, $x+\Gamma(t,x,z)=x+z$,
remains in the computational domain. Let $\Delta z>0$ divide $x_{M}-x_{1}$.
A discretization of the truncated $Z(t,x)$ is
\[
\left(Z_{h}^{n}\right)_{i}\coloneqq\left\{ 0,\Delta z,2\Delta z,\ldots,x_{M}-x_{1}\right\} +\left(x_{1}-x_{i}\right).
\]

An artificial Neumann boundary condition $\partial u/\partial x=0$
is used at $x_{1}$ and $x_{M}$ so that the first and last rows of
$\text{\L}_{h}^{n}(w)$ are zero. In particular, we assume an upwind
three-point stencil \cite[Appendix C]{forsyth2007numerical} so that
\[
\left[\text{\L}_{h}^{n}(w)v\right]_{i}\coloneqq\begin{cases}
0 & \text{if }i=1\text{ or }i=M\\
\left(v_{i-1}-v_{i}\right)\alpha_{i}^{n}(w)+\left(v_{i+1}-v_{i}\right)\beta_{i}^{n}(w) & \text{otherwise}
\end{cases}
\]
where $\alpha_{i}^{n}(w)$ and $\beta_{i}^{n}(w)$ are nonnegative
constants arising from the discretization.

The direct control problem is given by \eqref{eq:problem_combined}
subject to \eqref{eq:control_set_for_numerical_schemes} and \eqref{eq:direct_control_system}.
It is easy to verify that $\mathbb{B}^{2}x<\mathbb{B}x$ for all $x$
so that \ref{ass:subidempotence} is satisfied (recall $\mathbb{B}=\mathcal{M}_{h}^{n}$).
By \prettyref{thm:uniqueness}, solutions to the problem are unique.
However, policy iteration may fail since \ref{ass:path_to_sdd} is
not satisfied. A trivial example violating \ref{ass:path_to_sdd}
is that of a cycle between two nodes $x_{i}\neq x_{j}$ (e.g. $x_{i}+\Gamma(t,x_{i},x_{j}-x_{i})=x_{j}$
and $x_{j}+\Gamma(t,x_{j},x_{i}-x_{j})=x_{i}$).

We perform policy iteration on a modified problem with control set
$\mathcal{P}^{\prime}\subsetneq\mathcal{P}$ consisting of all controls
$P\coloneqq(w,z,\psi)$ in $\mathcal{P}$ satisfying
\[
\psi_{1}=0\text{ and }z_{i}<0\text{ for all }i>1
\]
so that \ref{ass:path_to_sdd}$^{\prime}$ holds. If $u^{n+1}$ is
nonincreasing (i.e. $u_{i-1}^{n+1}\geq u_{i}^{n+1}$), we can use
the same arguments as in Example \ref{exa:the_standard_machinery}
to establish that the solution $v=u^{n}$ of the modified problem
solves the original problem (i.e. \eqref{eq:modified_solves_original}
is satisfied) and is nonincreasing. Since $u^{N}=0$ is nonincreasing,
induction yields convergence of the scheme at each timestep.

\begin{rem}The condition $z_{i}<0$ appeals to intuition: the domestic
government should never perform an impulse that weakens the domestic
currency (i.e. $z_{i}\geq0$).\end{rem}

\subsubsection{Optimal control}

\begin{figure}
\begin{centering}
\subfloat[Optimal control ($T=10$)\label{fig:exchange_rate_control}]{\begin{centering}
\includegraphics[height=2in]{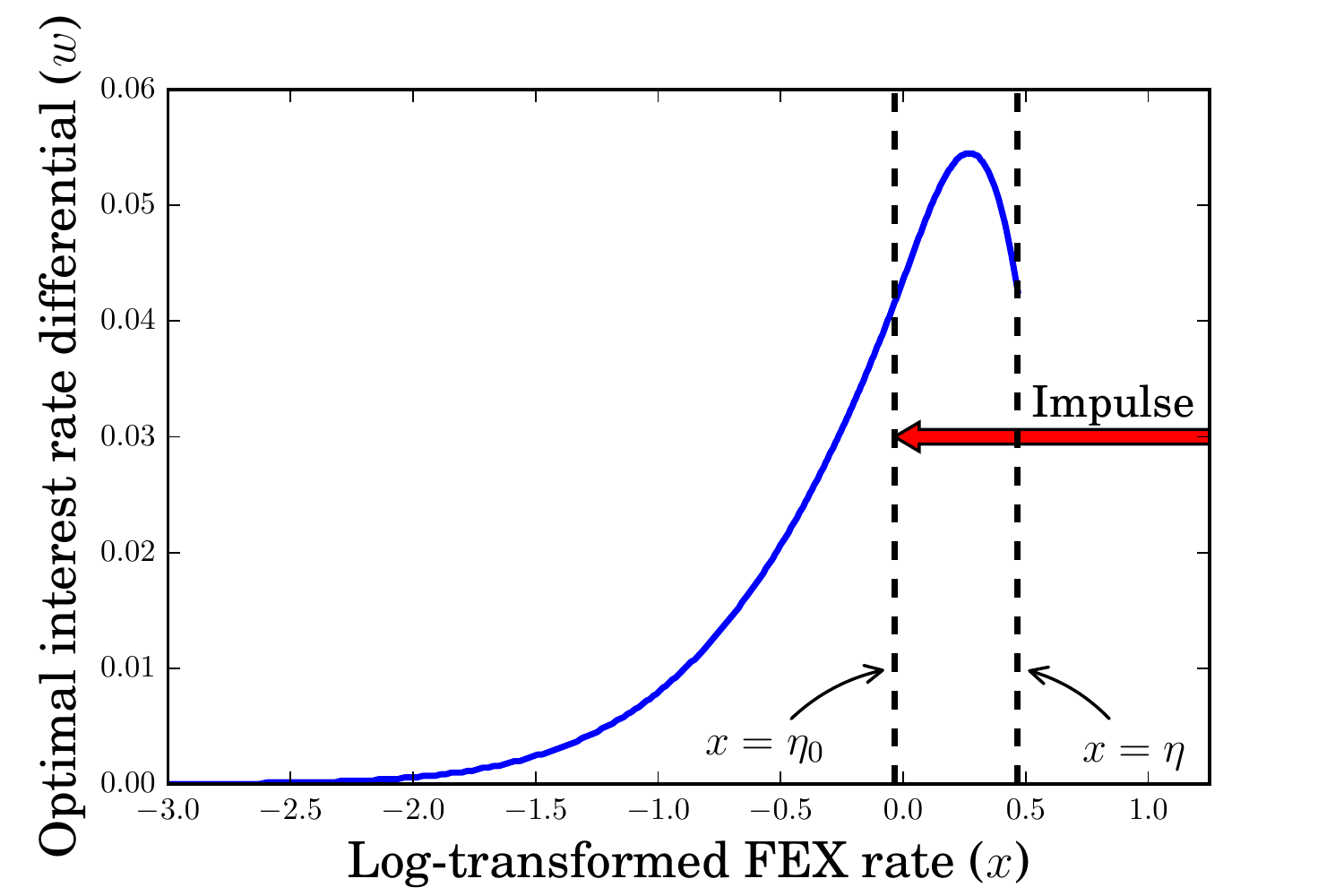}
\par\end{centering}
}\subfloat[Values for several times-to-expiry\label{fig:exchange_rate_value}]{\begin{centering}
\includegraphics[height=2in]{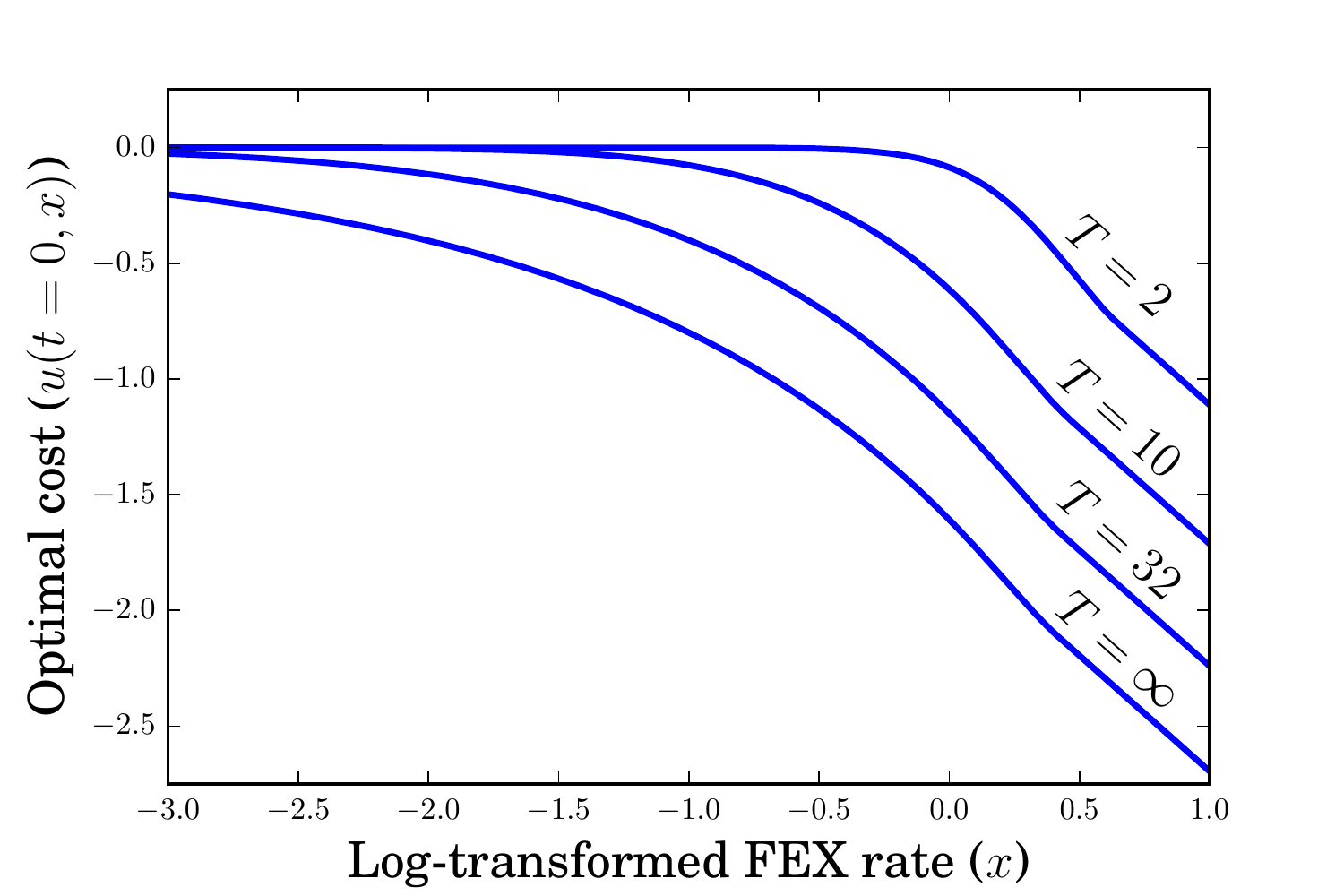}
\par\end{centering}
}
\par\end{centering}
\caption{Optimal combined control of the exchange rate at initial time}
\end{figure}
  If the currency is sufficiently weak, the government intervenes
in the FEX market. That is, at time $t$, the impulse occurs only
on $[\eta(t),\infty)$ for some $\eta(t)$ (the region $(-\infty,\eta(t))$
on which the impulse is not applied is referred to as the \emph{continuation
region}, corresponding to nodes $i$ with $\psi_{i}=0$ in the numerical
solution). When the FEX rate at time $t$ enters $[\eta(t),\infty)$,
the government intervenes to bring it back to $\eta_{0}(t)<\eta(t)$.
This phenomenon is shown in \prettyref{fig:exchange_rate_control}.
The optimal cost $u$ for varying expiry times $T$ is shown in \prettyref{fig:exchange_rate_value}.

\subsubsection{Convergence tests}

Convergence tests are shown in \prettyref{tab:exchange_rate_convergence}.
Times are normalized to the fastest semi-Lagrangian solve. The ratio
of successive changes in the solution (at a point) is reported.

BiCGSTAB with an ILUT preconditioner is used for the $\proc{Solve}$
routine (line 3 of $\proc{Policy-Iteration}$) in this and all subsequent
sections. In the specific case of the semi-Lagrangian scheme for the
exchange rate problem, a simple tridiagonal solve can be used since
the problem is a one-dimensional diffusion.
\begin{table}
\begin{centering}
{\footnotesize%
\begin{tabular}{ccccc}
\toprule
$h$ & $x$ nodes & $w$ nodes & $z$ nodes & Timesteps\tabularnewline
\midrule
1 & 32 & 8 & 16 & 16\tabularnewline
1/2 & 64 & 16 & 32 & 32\tabularnewline
$\vdots$ & $\vdots$ & $\vdots$ & $\vdots$ & $\vdots$\tabularnewline
\bottomrule
\end{tabular}}
\par\end{centering}
\caption{Optimal combined control of the exchange rate: numerical grid}
\end{table}
\begin{table}
\begin{centering}
\subfloat[Direct control]{\begin{centering}
{\footnotesize%
\begin{tabular}{cccccc}
\toprule
$h$ & $u(t=0,x=0)$ & Avg. policy its. & Avg. BiCGSTAB its. & Ratio & Norm. time\tabularnewline
\midrule
1 & -0.60685256 & 3.13 & 0.74 &  & 1.32e+01\tabularnewline
1/2 & -0.61187228 & 2.88 & 0.90 &  & 6.99e+01\tabularnewline
1/4 & -0.61300925 & 2.58 & 0.93 & 4.42 & 3.98e+02\tabularnewline
1/8 & -0.61317577 & 2.49 & 0.94 & 6.83 & 2.77e+03\tabularnewline
1/16 & -0.61321292 & 2.48 & 0.95 & 4.48 & 2.09e+04\tabularnewline
1/32 & -0.61321903 & 2.46 & 0.95 & 6.08 & 1.61e+05\tabularnewline
\bottomrule
\end{tabular}}
\par\end{centering}
}
\par\end{centering}
\begin{centering}
\subfloat[Penalized]{\begin{centering}
{\footnotesize%
\begin{tabular}{cccccc}
\toprule
$h$ & $u(t=0,x=0)$ & Avg. policy its. & Avg. BiCGSTAB its. & Ratio & Norm. time\tabularnewline
\midrule
1 & -0.60717652 & 3.19 & 0.71 &  & 1.38e+01\tabularnewline
1/2 & -0.61194960 & 2.88 & 0.76 &  & 6.96e+01\tabularnewline
1/4 & -0.61302973 & 2.55 & 0.91 & 4.42 & 3.95e+02\tabularnewline
1/8 & -0.61317966 & 2.48 & 1.28 & 7.20 & 2.76e+03\tabularnewline
1/16 & -0.61321390 & 2.48 & 1.33 & 4.38 & 2.09e+04\tabularnewline
1/32 & -0.61321928 & 2.46 & 0.99 & 6.36 & 1.61e+05\tabularnewline
\bottomrule
\end{tabular}}
\par\end{centering}
}
\par\end{centering}
\begin{centering}
\subfloat[Semi-Lagrangian ]{\begin{centering}
{\footnotesize%
\begin{tabular}{cccc}
\toprule
$h$ & $u(t=0,x=0)$ & Ratio & Norm. time\tabularnewline
\midrule
1 & -0.69277804 &  & 1.00e+00\tabularnewline
1/2 & -0.64806716 &  & 6.49e+00\tabularnewline
1/4 & -0.62865965 & 2.30 & 4.90e+01\tabularnewline
1/8 & -0.62027822 & 2.32 & 3.86e+02\tabularnewline
1/16 & -0.61653511 & 2.24 & 3.17e+03\tabularnewline
1/32 & -0.61480123 & 2.16 & 2.64e+04\tabularnewline
1/64 & -0.61398311 & 2.12 & 2.17e+05\tabularnewline
\bottomrule
\end{tabular}}
\par\end{centering}
}
\par\end{centering}
\caption{Optimal combined control of the exchange rate: convergence tests\label{tab:exchange_rate_convergence}}
\end{table}

$\proc{Policy-Iteration}$ is terminated upon achieving a desired
error tolerance:
\[
\max_{i}\left\{ \frac{\left|v_{i}^{k}-v_{i}^{k-1}\right|}{\max\left(\left|v_{i}^{k}\right|,\text{scale}\right)}\right\} <\text{tol}.
\]
The scale parameter ensures that unrealistic levels of accuracy are
not imposed on the solution. We take $\text{tol}=10^{-6}$ and $\text{scale}=1$
for this and all future tests. The initial guess $v^{0}$ is taken
to be the solution at the previous timestep, $u^{n+1}$.

Following \cite{huang2013inexact}, we take $\epsilon\coloneqq D\Delta t$
and $\delta\coloneqq1/\epsilon$ with $D=10^{-2}$.

For completeness, we mention that the obvious splitting with $\hat{\mu}(t,x)\coloneqq0$
and $\hat{f}(t,x)\coloneqq-p(x)$ is used in the semi-Lagrangian scheme.
The numerical examples of the sequel (\prettyref{subsec:optimal_consumption}
and \prettyref{subsec:gmwb}) also use the obvious splittings.

The direct control and penalized schemes converge superlinearly. We
speculate that this occurs since $x\mapsto u(t,x)$ is linear to the
right of $x=\eta_{0}(t)$, and hence no error is made in approximating
the term $D_{x}u$ and $D_{x}^{2}u$ there. Assuming the solution
$u^{n+1}$ of the semi-Lagrangian scheme is linear to the right of
$\eta_{0}(t^{n+1})$, error is introduced due to the approximation
of $\eta_{0}(t^{n})$ by $\eta_{0}(t^{n+1})$. This suggests that
the direct control and penalized schemes may outperform the semi-Lagrangian
scheme for problems with simple continuation regions and linear transaction
costs.

Unsurprisingly, the direct control and penalized schemes are near-identical
in performance and accuracy since the scaling and penalty factors
are chosen identically (i.e. $\delta=1/\epsilon$). We mention that
the  choice of $\delta=1$ (i.e. no scaling) yields poor performance
in the direct control setting (see \cite{huang2013inexact} for an
explanation).

Note that the average number of BiCGSTAB iterations per call to $\proc{Solve}$
can be less than one, suggesting that sometimes, no BiCGSTAB iterations
are required on line 3 of $\proc{Policy-Iteration}$. This occurs
when the initial residual, $b(P^{\ell})-A(P^{\ell})v^{\ell-1}$, is
small enough in magnitude (i.e. at the last policy iteration before
convergence).

\subsection{Optimal consumption and portfolio with both fixed and proportional
transaction costs\label{subsec:optimal_consumption}}

The following is studied in \cite{chancelier2002combined}. Consider
an investor that, at any point in time, has two investment opportunities:
a stock and a bank account. Let $(S_{t})_{t\geq0}$ and $(B_{t})_{t\geq0}$
denote the amount of money invested in these two, respectively. The
investor is able to \begin{itemize}

\item consume continuously \textbf{(stochastic control)};

\item transfer money from the bank to the stock (or vice versa) subject
to a transaction cost \textbf{(impulse control)}.

\end{itemize}

Denote by $(w_{t})_{t\geq0}$ the consumption rate with $0\leq w_{t}\leq w_{\text{max}}$.
At any point in time, the investor can move money to ($z>0$) or from
($z<0$) the stock incurring a transaction cost of $\lambda|z|+C$
where $C>0$ and $0\leq\lambda<1$. This is captured by
\begin{align*}
dS_{t} & =\mu S_{t}dt+\xi S_{t}d\mathfrak{W}_{t} & \text{ if }\tau_{j}<t<\tau_{j+1} &  & \text{(stochastic control)};\\
dB_{t} & =\left(rB_{t}-w_{t}\right)dt & \text{ if }\tau_{j}<t<\tau_{j+1} &  & \text{(stochastic control)};\\
S_{\tau_{j+1}} & =S_{\tau_{j+1}-}+z_{\tau_{j+1}} &  &  & \text{(impulse control)};\\
B_{\tau_{j+1}} & =B_{\tau_{j+1}-}-z_{\tau_{j+1}}-\lambda\left|z_{\tau_{j+1}}\right|-C &  &  & \text{(impulse control)}.
\end{align*}
A combined control $\theta\coloneqq(w,\tau_{1},\tau_{2},\ldots,z_{\tau_{1}},z_{\tau_{2}},\ldots)$
is admissible if at all times, the stock holdings and bank account
are nonnegative. Let $\Theta$ denote the set of all admissible controls.

The investor's maximal expected utility at time $t$ with amount $S_{t}=s$
in the stock and $B_{t}=b$ in the bank account is given by
\[
u(t,s,b)\coloneqq e^{\rho t}\sup_{\theta\in\Theta}\mathbb{E}^{(t,s,b)}\left[\int_{t}^{T}e^{-\rho t^{\prime}}\frac{w_{t^{\prime}}^{\gamma}}{\gamma}dt^{\prime}+e^{-\rho T}\frac{\max\left(B_{T}+\left(1-\lambda\right)S_{T}-C,0\right)^{\gamma}}{\gamma}\right]
\]
where $0\leq1-\gamma<1$ is the investor's relative risk-aversion
and $\rho\geq0$ is the rate of time preference. The utility received
at the expiry corresponds to liquidating the asset and consuming everything
instantaneously.

The associated HJBQVI on $\Omega\coloneqq(0,\infty)^{2}$ and $\Lambda\coloneqq\emptyset$
is given by \eqref{eq:hjbqvi_pde_and_boundary} with $g(T,x)\coloneqq\max(b+(1-\lambda)s-C,0)^{\gamma}/\gamma$
and
\begin{alignat*}{2}
W & \coloneqq\left[0,w_{\text{max}}\right]; & Z(t,x) & \coloneqq\left\{ z\colon x+\Gamma(t,x,z)\geq0\right\} ;\\
\text{\L}^{w} & \coloneqq\frac{1}{2}\xi^{2}s^{2}\frac{\partial^{2}}{\partial s^{2}}+\mu s\frac{\partial}{\partial s}+\begin{cases}
\left(rb-w\right)\frac{\partial}{\partial b} & \text{if }b>0;\\
0 & \text{otherwise;}
\end{cases} & \Gamma(t,x,z) & \coloneqq(z,-z-\lambda|z|-C);\\
f^{w} & \coloneqq\begin{cases}
w^{\gamma}/\gamma & \text{if }b>0;\\
0 & \text{otherwise;}
\end{cases} & K(t,x,z) & \coloneqq0.
\end{alignat*}
In the above, expressions such as $s\cdot\partial/\partial s$ are
to be interpreted as identically zero when $s=0$. The convention
$[q_{1},q_{2}]=\emptyset$ if $q_{1}>q_{2}$ is used.
\begin{table}
\begin{centering}
{\footnotesize%
\begin{tabular}{rrrl}
\toprule
\multicolumn{2}{r}{Parameter} & \multicolumn{2}{l}{Value}\tabularnewline
\midrule
Discount factor & $\rho$ & 10\% & per annum\tabularnewline
Interest rate & $r$ & 7\% & per annum\tabularnewline
Drift & $\mu$ & 11\% & per annum\tabularnewline
Volatility & $\xi$ & 30\% & per annum\tabularnewline
Expiry & $T$ & 40 & years\tabularnewline
Relative risk aversion & $1-\gamma$ & 0.7 & \tabularnewline
Scaled transaction cost & $\lambda$ & 0.1 & \tabularnewline
Fixed transaction cost & $C$ & 0.05 & \tabularnewline
Maximum withdrawal rate & $w_{\text{max}}$ & 100 & \tabularnewline
Initial stock value & $s_{0}$ & \$45.20 & \tabularnewline
Initial bank account value & $b_{0}$ & \$45.20 & \tabularnewline
\bottomrule
\end{tabular}}
\par\end{centering}
\caption{Optimal consumption: parameters from \cite{chancelier2002combined}\label{tab:optimal_consumption_parameters}}
\end{table}

\subsubsection{Convergence of the direct control scheme}

As in $\S$\ref{subsec:exchange_rate_direct_control}, the domain
$[0,T]\times[0,\infty)^{2}$ and $Z(t,x)$ are truncated so that the
state after an impulse $x_{i}+\Gamma(t,x_{i},z_{i})$ remains in the
truncated domain. We use the notation $x_{i}=(s_{i},b_{i})$. The
direct control problem is given by \eqref{eq:problem_combined} subject
to \eqref{eq:control_set_for_numerical_schemes} and \eqref{eq:direct_control_system}.

Suppose there exists a grid node $x_{i^{1}}$ and $P\coloneqq(w,z,\psi)$
such that $\psi_{i^{1}}=1$ and that there exists no path in $B(z)$
from $i^{1}$ to some $j$ with $\psi_{j}=0$. Since $C>0$, there
exists a path $i^{1}\rightarrow i^{2}\rightarrow\cdots$ of infinite
length such that $s_{i^{1}}+b_{i^{1}}>s_{i^{2}}+b_{i^{2}}>\cdots$
and $\psi_{i^{q}}=1$ for all $q$. Due to the finitude of the grid,
$x_{i^{q}}=x_{i^{\ell}}$ (and hence $s_{i^{q}}+b_{i^{q}}=s_{i^{\ell}}+b_{i^{\ell}}$)
for some $q<\ell$, a contradiction. It follows that no such $x_{i^{1}}$
exists: \ref{ass:path_to_sdd} is satisfied.

\subsubsection{Optimal control}

As in \cite{chancelier2002combined}, three regions are observed in
an optimal control: the \emph{buy} (B), \emph{sell} (S), and \emph{continuation/no
transaction }(NT) regions. In the B and S regions, the controller
intervenes by jumping back to the closest of the two lines marked
$\Delta_{1}$ and $\Delta_{2}$. In NT, the controller consumes continuously.
\begin{figure}
\begin{centering}
\subfloat[Optimal control]{\begin{centering}
\includegraphics[height=2in]{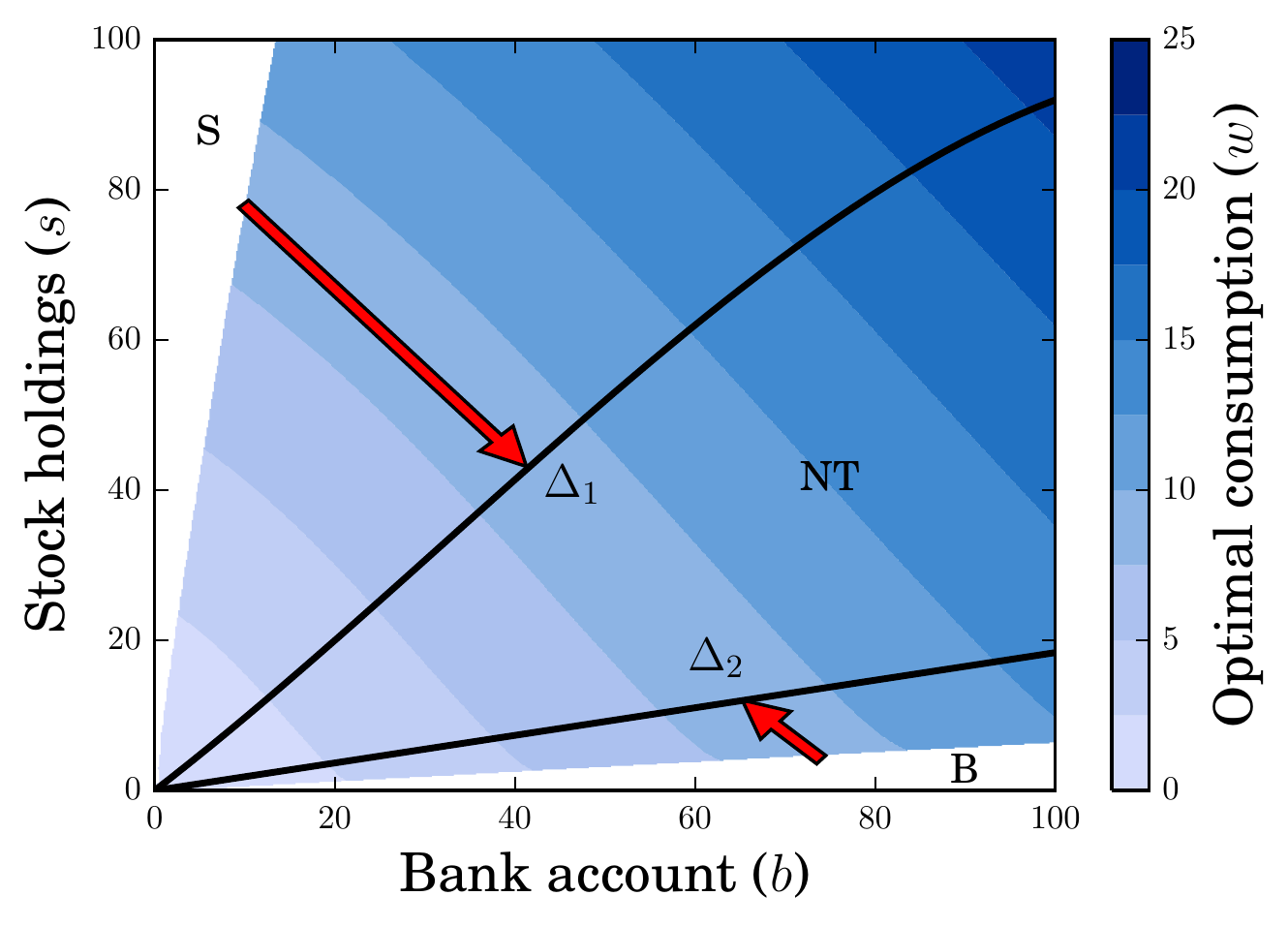}
\par\end{centering}
}\subfloat[Value]{\begin{centering}
\includegraphics[height=2.3in]{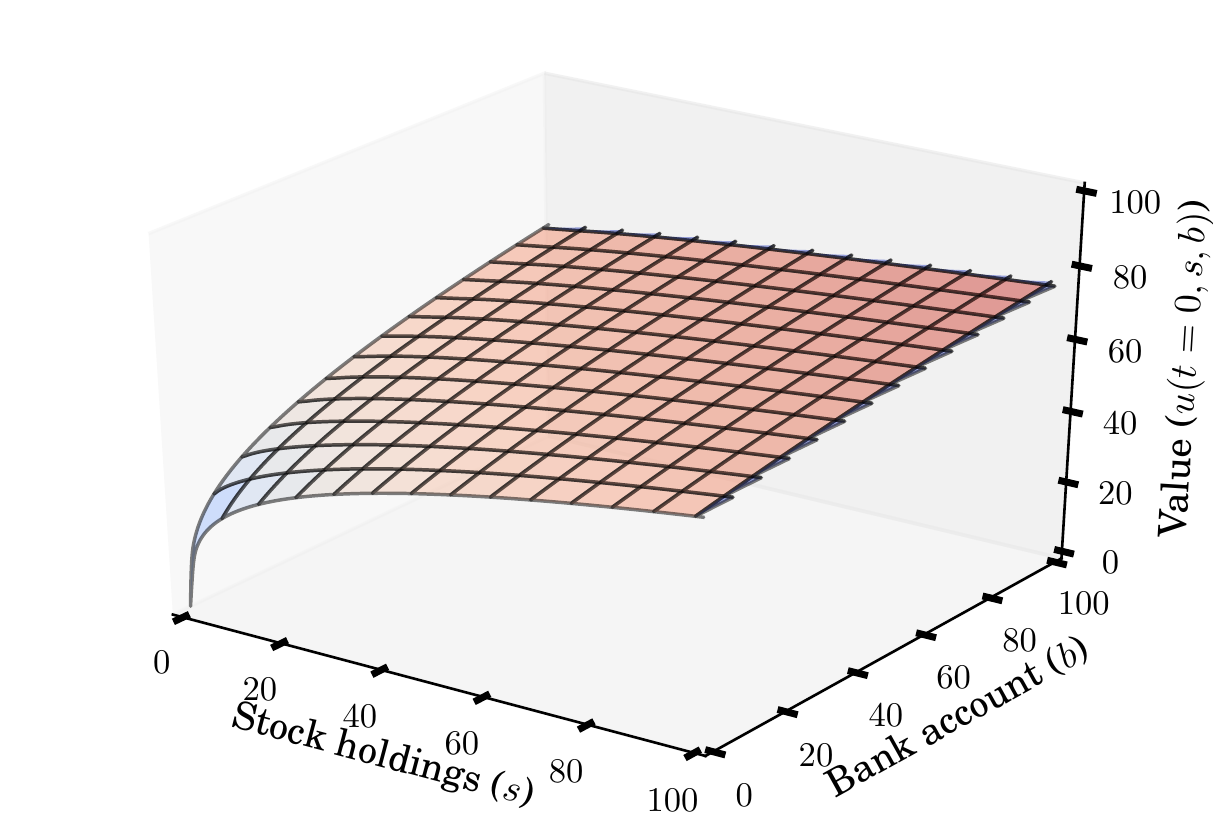}
\par\end{centering}
}
\par\end{centering}
\caption{Optimal consumption at initial time (compare with \cite[Figures 1 and 2]{chancelier2002combined})
\label{fig:optimal_consumption_chancelier}}
\end{figure}

\subsubsection{Convergence tests}

Convergence tests are shown in \prettyref{tab:optimal_consumption_convergence}.
We mention that artificial Neumann boundary conditions $\partial^{q}u/\partial s^{q}=0$
and $\partial u/\partial b=0$ are used at the truncated boundaries
$s=s_{\text{max}}$ and $b=b_{\text{max}}$. The results for the direct
control and penalized schemes are near-identical, though the former
requires significantly more policy iterations per timestep. The rate
of convergence for the semi-Lagrangian scheme becomes sublinear for
higher levels of refinement.
\begin{table}
\begin{centering}
{\footnotesize%
\begin{tabular}{cccccc}
\toprule
$h$ & $s$ nodes & $b$ nodes & $w$ nodes & $z$ nodes & Timesteps\tabularnewline
\midrule
1 & 20 & 20 & 15 & 15 & 32\tabularnewline
1/2 & 40 & 40 & 30 & 30 & 64\tabularnewline
$\vdots$ & $\vdots$ & $\vdots$ & $\vdots$ & $\vdots$ & $\vdots$\tabularnewline
\bottomrule
\end{tabular}}
\par\end{centering}
\caption{Optimal consumption: numerical grid}
\end{table}
\begin{table}
\begin{centering}
\subfloat[Direct control]{\begin{centering}
{\footnotesize%
\begin{tabular}{cccccc}
\toprule
$h$ & $u(t=0,s_{0},b_{0})$ & Avg. policy its. & Avg. BiCGSTAB its. & Ratio & Norm. time\tabularnewline
\midrule
1 & 56.062123 & 7.63 & 1.28 &  & 1.63e+01\tabularnewline
1/2 & 58.739224 & 8.80 & 1.90 &  & 2.93e+02\tabularnewline
1/4 & 59.420125 & 10.4 & 2.28 & 3.93 & 5.66e+03\tabularnewline
1/8 & 59.658413 & 11.8 & 3.28 & 2.86 & 1.03e+05\tabularnewline
1/16 & 59.754780 & 13.3 & 4.45 & 2.47 & 1.85e+06\tabularnewline
1/32 & 59.797206 & 14.2 & 6.54 & 2.27 & 3.05e+07\tabularnewline
\bottomrule
\end{tabular}}
\par\end{centering}
}
\par\end{centering}
\begin{centering}
\subfloat[Penalized]{\begin{centering}
{\footnotesize%
\begin{tabular}{cccccc}
\toprule
$h$ & $u(t=0,s_{0},b_{0})$ & Avg. policy its. & Avg. BiCGSTAB its. & Ratio & Norm. time\tabularnewline
\midrule
1 & 56.058496 & 4.09 & 1.43 &  & 1.03e+01\tabularnewline
1/2 & 58.739041 & 3.95 & 1.77 &  & 1.47e+02\tabularnewline
1/4 & 59.420075 & 3.40 & 2.35 & 3.94 & 1.99e+03\tabularnewline
1/8 & 59.658399 & 3.04 & 3.69 & 2.86 & 2.69e+04\tabularnewline
1/16 & 59.754778 & 2.80 & 5.50 & 2.47 & 4.02e+05\tabularnewline
1/32 & 59.797215 & 2.58 & 5.98 & 2.27 & 5.86e+06\tabularnewline
\bottomrule
\end{tabular}}
\par\end{centering}
}
\par\end{centering}
\begin{centering}
\subfloat[Semi-Lagrangian]{\begin{centering}
{\footnotesize%
\begin{tabular}{ccccc}
\toprule
$h$ & $u(t=0,s_{0},b_{0})$ & Avg. BiCGSTAB its. & Ratio & Norm. time\tabularnewline
\midrule
1 & 55.621632 & 1.00 &  & 1.00e+00\tabularnewline
1/2 & 58.782064 & 2.00 &  & 1.55e+01\tabularnewline
1/4 & 59.404576 & 3.00 & 5.08 & 2.60e+02\tabularnewline
1/8 & 59.569370 & 4.00 & 3.78 & 4.05e+03\tabularnewline
1/16 & 59.651186 & 6.00 & 2.01 & 6.68e+04\tabularnewline
1/32 & 59.705315 & 8.00 & 1.51 & 1.11e+06\tabularnewline
1/64 & 59.748325 & 10.8 & 1.26 & 1.85e+07\tabularnewline
\bottomrule
\end{tabular}}
\par\end{centering}
}
\par\end{centering}
\caption{Optimal consumption: convergence tests\label{tab:optimal_consumption_convergence}}
\end{table}

\subsection{Guaranteed minimum withdrawal benefit (GMWB) in variable annuities\label{subsec:gmwb}}

Guaranteed minimum withdrawal benefits (GMWB) in variable annuities
provide investors with the tax-deferred nature of variable annuities
along with a guaranteed minimum payment. GMWB pricing has been previously
considered as a singular control problem in \cite{milevsky2006financial,dai2008guaranteed}
 and as an impulse control problem in \cite{chen2008numerical}.
Optimal controls for GMWBs with annual withdrawals is considered in
\cite{azimzadeh2014}.

A GMWB is composed of investment and guarantee accounts, $(S_{t})_{t\geq0}$
and $(A_{t})_{t\geq0}$, respectively. It is bootstrapped via a lump
sum payment $s_{0}$ to an insurer, placed in the (risky) investment
account (i.e. $S_{0}=s_{0}$). A GMWB promises to pay back at least
the lump sum $s_{0}$, assuming that the holder of the contract does
not withdraw above a certain rate. This is captured by setting $A_{0}=s_{0}$
and reducing both investment and guarantee accounts on a dollar-for-dollar
basis upon withdrawals. The holder can continue to withdraw as long
as the guarantee account remains positive. In particular, at any point
in time until the expiry of the contract $T$, the holder may:\begin{itemize}

\item withdraw continuously at a rate of $G\geq0$ per annum regardless
of the performance of the investment\textbf{ (stochastic control)};

\item withdraw a finite amount $z$ instantaneously reduced by the
excess withdrawal rate $0\leq\kappa\leq1$ \textbf{(impulse control)}.

\end{itemize} The holder gets the larger of the investment account
and a full withdrawal at expiry.

The guarantee account can be withdrawn from continuously or instantaneously:
\begin{align*}
dA_{t} & =-w_{t}dt & \text{if }\tau_{j}<t<\tau_{j+1} &  & \text{(stochastic control)};\\
A_{\tau_{j+1}} & =A_{\tau_{j+1}-}-z_{\tau_{j+1}} &  &  & \text{(impulse control)}.
\end{align*}
Let $\rho\geq0$ denote the risk-free rate. Consider an index $(Y_{t})_{t\geq0}$
following
\[
dY_{t}=\rho Y_{t}dt+\xi Y_{t}d\mathfrak{W}_{t}
\]
under the risk-neutral measure. The investment account tracks the
index and is adjusted by withdrawals from the guarantee account:
\begin{align*}
dS_{t} & =\left(\left(\rho-\eta\right)S_{t}-w_{t}\mathds{1}_{\{S_{t}>0\}}\right)dt+\xi S_{t}d\mathfrak{W}_{t} & \text{if }\tau_{j}<t<\tau_{j+1};\\
S_{\tau_{j+1}} & =\max\left(S_{\tau_{j+1}-}-z_{\tau_{j+1}},0\right).
\end{align*}
$0\leq\eta\leq\rho$ is the proportional rate deducted from the investment
account and serves as a premium for the guarantee. A combined control
$\theta\coloneqq(w,\tau_{1},\tau_{2},\ldots,z_{1},z_{2},\ldots)$
is admissible if at all times, the guarantee account is nonnegative.
Let $\Theta$ denote the set of all admissible controls.

The insurer's worst-case cost of hedging (discussed in \cite{azimzadeh2014hedging})
a GMWB at time $t$ with amount $S_{t}=s$ in the risky account and
amount $A_{t}=a$ is
\begin{multline*}
u(t,s,a)\coloneqq e^{\rho t}\sup_{\theta\in\Theta}\mathbb{E}^{(t,s,a)}\biggl[\int_{t}^{T}e^{-\rho t^{\prime}}w_{t^{\prime}}dt^{\prime}+e^{-\rho T}\max\left(S_{T},\left(1-\kappa\right)A_{T}-C\right)\\
+\sum_{\tau_{j}\leq T}e^{-\rho\tau_{j}}\left(\left(1-\kappa\right)z_{\tau_{j}}-C\right)\biggr]
\end{multline*}
where $C>0$ is a fixed transaction cost. The terminal payoff corresponds
to the maximum of the investment account or withdrawing the entirety
of the guarantee account at the excess withdrawal rate.

Let $x\coloneqq(s,a)$ and $\zeta\coloneqq\rho-\eta$. The associated
HJBQVI on $\Omega\coloneqq(0,\infty)^{2}$ and $\Lambda\coloneqq\emptyset$
is given by \eqref{eq:hjbqvi_pde_and_boundary} with $g(T,x)\coloneqq\max(s,(1-\kappa)a-C)$
and
\begin{table}
\begin{centering}
{\footnotesize%
\begin{tabular}{rrrl}
\toprule
\multicolumn{2}{r}{Parameter} & \multicolumn{2}{l}{Value}\tabularnewline
\midrule
Risk-free rate & $\rho$ & 5\% & per annum\tabularnewline
Premium & $\eta$ & 0\% & per annum\tabularnewline
Volatility & $\xi$ & 30\% & per annum\tabularnewline
Expiry & $T$ & 10 & years\tabularnewline
Withdrawal rate & $G$ & \$10 & per annum\tabularnewline
Excess withdrawal rate & $\kappa$ & 10\% & \tabularnewline
Fixed transaction cost & $C$ & $1/10^{6}$ & \tabularnewline
Initial lump sum payment & $s_{0}$ & \$100 & \tabularnewline
\bottomrule
\end{tabular}}
\par\end{centering}
\caption{GMWB: parameters from \cite{chen2008numerical}\label{tab:gmwb_parameters}}
\end{table}
\begin{alignat*}{2}
W & \coloneqq\left[0,G\right]; & \,Z(t,x) & \coloneqq\left[0,a\right];\\
\text{\L}^{w} & \coloneqq\frac{\xi^{2}s^{2}}{2}\frac{\partial^{2}}{\partial s^{2}}+\zeta s\frac{\partial}{\partial s}+\begin{cases}
-w\left(\frac{\partial}{\partial a}+\frac{\partial}{\partial s}\right) & \text{if }s,a>0;\\
-w\frac{\partial}{\partial a} & \text{if }a>0;\\
0 & \text{otherwise;}
\end{cases} & \,\Gamma(t,x,z) & \coloneqq-\left(\min\left(z,s\right),z\right);\\
f^{w} & \coloneqq\begin{cases}
w & \text{if }a>0;\\
0 & \text{otherwise;}
\end{cases} & \,K(t,x,z) & \coloneqq\left(1-\kappa\right)z-C.
\end{alignat*}

\subsubsection{Convergence of the direct control scheme}

We use the notation $x_{i}=(s_{i},a_{i})$ and assume the origin $(0,0)$
is part of the numerical grid. The direct control problem is given
by \eqref{eq:problem_combined} subject to \eqref{eq:control_set_for_numerical_schemes}
and \eqref{eq:direct_control_system}.

Suppose \ref{ass:subidempotence} is not satisfied so that for some
solution $v$, there exists $i$ such that $v_{i}=[\mathbb{B}v]_{i}=[\mathbb{B}^{2}v]_{i}=\cdots$.
Since $C>0$, it follows that $v_{i}=-\infty$, a contradiction. Hence,
\ref{ass:subidempotence} holds.

We perform policy iteration on a modified problem with control set
$\mathcal{P}^{\prime}$ consisting of all controls $P\coloneqq(w,z,\psi)$
in $\mathcal{P}$ satisfying
\[
\psi_{i}=0\text{ whenever }a_{i}=0\text{ and }z_{i}\neq0\text{ }\text{whenever }a_{i}\neq0.
\]
As in \prettyref{exa:unidirectional}, \ref{ass:path_to_sdd}$^{\prime}$
follows from the unidirectionality of $z_{i}$. \eqref{eq:modified_solves_original}
is established by noting that $z_{i}=0$ incurs an infinite cost (and
is therefore suboptimal). Convergence then follows from an application
of \prettyref{thm:the_standard_machinery}

\begin{rem} The condition $z_{i}\neq0$ appeals to intuition: the
holder should never pay $C>0$ for a withdrawal of zero dollars.\end{rem}

\subsubsection{Optimal control}

\prettyref{fig:gmwb_control} shows an optimal control for a GMWB,
corresponding to a worst-case cost of hedging from the perspective
of the insurer (optimality from the holder's perspective, who may have to take into
consideration consumption, taxation, etc., is explored in \cite{azimzadeh2014hedging}). We refer to \cite{chen2008numerical} for an explanation of the
three distinct withdrawal regions.
\begin{figure}
\begin{centering}
\includegraphics[height=2.5in]{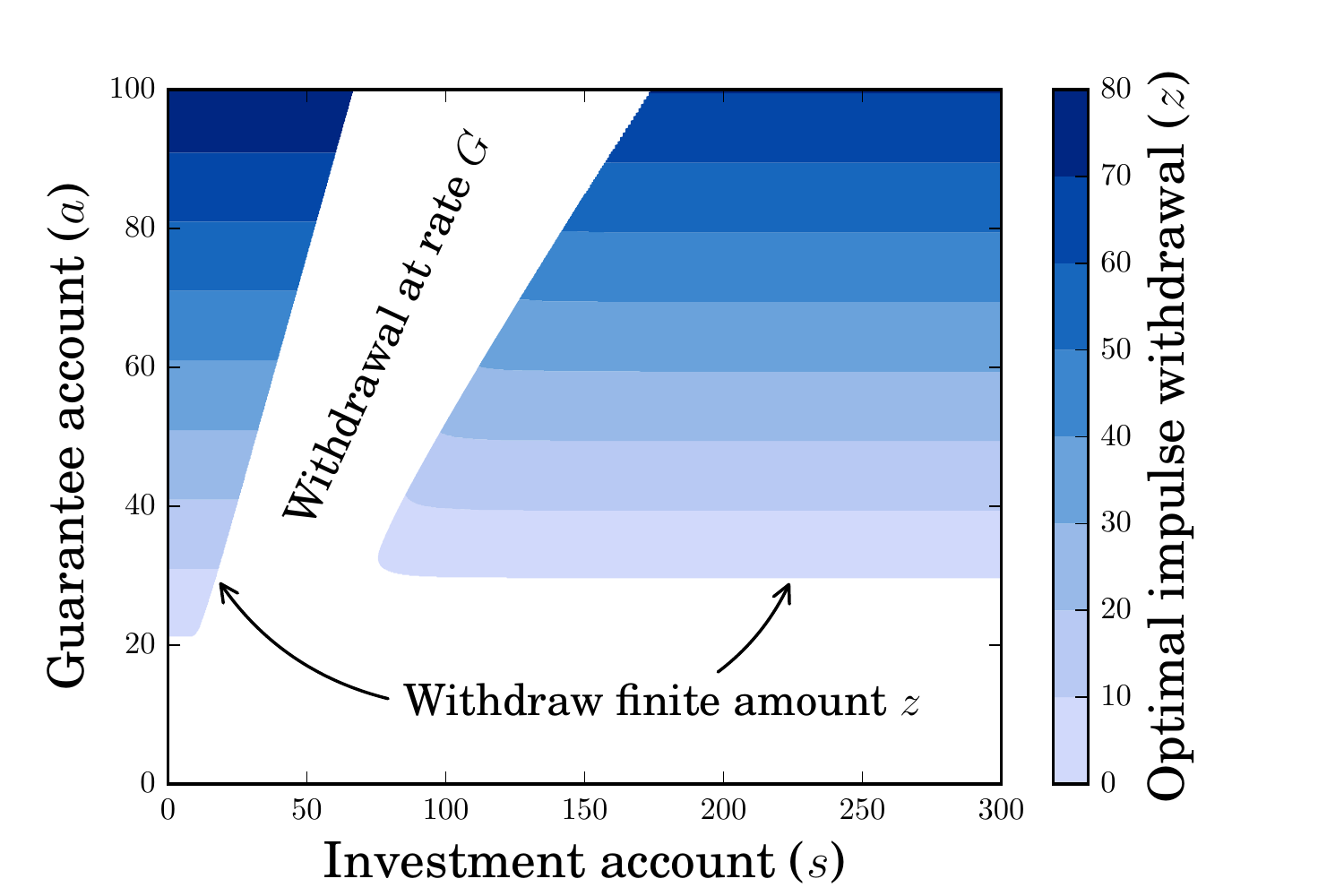}
\par\end{centering}
\caption{GMWB: optimal control at initial time with $\eta=0.03126$ from \cite{chen2008numerical}\label{fig:gmwb_control}}
\end{figure}

\subsubsection{Convergence tests}

Convergence tests are shown in \prettyref{tab:gmwb_convergence}.
Since $w\mapsto\text{\L}(t,x,w)$ is linear, we take $W_{h}=\{0,G\}$
independent of $h$. An asymptotic boundary condition is used at the
truncated boundary $s=s_{\text{max}}$ (no boundary condition is needed
at $a=a_{\text{max}}$ since the characteristics are outgoing in the
$a$ direction). For details, see \cite{chen2008numerical}. The direct
control and penalized scheme produce near-identical results and exhibit
similar execution times.
\begin{table}
\begin{centering}
{\footnotesize%
\begin{tabular}{ccccc}
\toprule
$h$ & $w$ nodes & $a$ nodes & $z$ nodes & Timesteps\tabularnewline
\midrule
1 & 64 & 50 & 2 & 32\tabularnewline
1/2 & 128 & 100 & 4 & 64\tabularnewline
$\vdots$ & $\vdots$ & $\vdots$ & $\vdots$ & $\vdots$\tabularnewline
\bottomrule
\end{tabular}}
\par\end{centering}
\caption{GMWB: numerical grid}
\end{table}
\begin{table}
\begin{centering}
\subfloat[Direct control]{\begin{centering}
{\footnotesize%
\begin{tabular}{cccccc}
\toprule
$h$ & $u(t=0,s_{0},s_{0})$ & Avg. policy its. & Avg. BiCGSTAB its. & Ratio & Norm. time\tabularnewline
\midrule
1 & 107.68342 & 3.47 & 1.47 &  & 1.71e+01\tabularnewline
1/2 & 107.70679 & 4.25 & 1.64 &  & 2.03e+02\tabularnewline
1/4 & 107.71878 & 4.34 & 1.85 & 1.95 & 2.60e+03\tabularnewline
1/8 & 107.72578 & 4.43 & 2.22 & 1.71 & 3.46e+04\tabularnewline
1/16 & 107.72964 & 4.31 & 2.71 & 1.81 & 4.75e+05\tabularnewline
1/32 & 107.73176 & 4.15 & 3.40 & 1.83 & 7.55e+06\tabularnewline
\bottomrule
\end{tabular}}
\par\end{centering}
}
\par\end{centering}
\begin{centering}
\subfloat[Penalized]{\begin{centering}
{\footnotesize%
\begin{tabular}{cccccc}
\toprule
$h$ & $u(t=0,s_{0},s_{0})$ & Avg. policy its. & Avg. BiCGSTAB its. & Ratio & Norm. time\tabularnewline
\midrule
1 & 107.68243 & 3.47 & 1.58 &  & 1.80e+01\tabularnewline
1/2 & 107.70639 & 4.08 & 1.65 &  & 2.06e+02\tabularnewline
1/4 & 107.71870 & 3.95 & 1.76 & 1.95 & 2.45e+03\tabularnewline
1/8 & 107.72576 & 3.98 & 1.97 & 1.74 & 3.22e+04\tabularnewline
1/16 & 107.72964 & 3.71 & 2.39 & 1.82 & 4.34e+05\tabularnewline
1/32 & 107.73175 & 3.32 & 3.01 & 1.83 & 6.62e+06\tabularnewline
\bottomrule
\end{tabular}}
\par\end{centering}
}
\par\end{centering}
\begin{centering}
\subfloat[Semi-Lagrangian]{\begin{centering}
{\footnotesize%
\begin{tabular}{ccccc}
\toprule
$h$ & $u(t=0,s_{0},s_{0})$ & Avg. BiCGSTAB its. & Ratio & Norm. time\tabularnewline
\midrule
1 & 107.42351 & 1.00 &  & 1.00e+00\tabularnewline
1/2 & 107.68443 & 1.00 &  & 1.02e+01\tabularnewline
1/4 & 107.70841 & 1.00 & 10.9 & 1.39e+02\tabularnewline
1/8 & 107.72257 & 1.00 & 1.70 & 2.03e+03\tabularnewline
1/16 & 107.73015 & 1.00 & 1.87 & 3.31e+04\tabularnewline
1/32 & 107.73224 & 1.98 & 3.62 & 6.10e+05\tabularnewline
1/64 & 107.73337 & 2.90 & 1.85 & 1.13e+07\tabularnewline
\bottomrule
\end{tabular}}
\par\end{centering}
}
\par\end{centering}
\caption{GMWB: convergence tests\label{tab:gmwb_convergence}}
\end{table}

\section{Concluding remarks}

This work establishes the well-posedness of \eqref{eq:fixed_point_problem}
and gives sufficient conditions for convergence of the corresponding
policy iteration. \eqref{eq:fixed_point_problem} has applications
to the numerical solutions of HJBQVIs ($\S$\ref{sec:numerical_schemes}\textendash \ref{sec:examples})
and infinite-horizon MDPs with vanishing discount factor (\prettyref{cor:optimal_control_of_markov_chain_with_vanishing_discount}).

A semi-Lagrangian scheme for the HJBQVI \eqref{eq:hjbqvi_pde_and_boundary}
is both easy to implement and requires only one linear solve per timestep.
However, it cannot be used if the diffusion or jump arrival rate of
the underlying stochastic process are control-dependent.

The direct control and penalized schemes do not suffer these limitations.
Numerical evidence suggests that both schemes perform similarly. However,
policy iteration applied to the direct control scheme can fail (\prettyref{exa:policy_iteration_fails})
unless additional care is taken to remove certain suboptimal controls.
The removal of these controls is ad hoc (i.e. problem dependent).
Therefore, we recommend discretizing the problem with a penalized
scheme, applying policy iteration to solve the resulting nonlinear
equations.

\appendix

\section{General well-posedness of the Bellman problem \eqref{eq:bellman_problem}\label{app:sequential_policy_iteration}}

By modifying policy iteration, it is possible to arrive at a version
of \prettyref{prop:convergence_of_policy_iteration} independent of
\assargsup. We can interpret this algorithm as taking into account
the error from approximating the supremum in $\proc{Policy-Iteration}$.
The algorithm, closely related to \cite[Algorithm Ho-4]{bokanowski2009some},
is given below (subject to the convention that for $x$ in $\mathbb{R}^{M}$
and $c$ in $\mathbb{R}$, $x+c$ is the vector $x$ with $c$ added
to each component):

\begin{codebox}

\Procname{\proc{$\epsilon$-Policy-Iteration}$(\mathcal{P},A(\cdot),b(\cdot),v^{0})$}

\li    Pick a positive sequence $(\epsilon^{\ell})_{\ell\geq1}$
in $\mathbb{R}$ such that $\sum_{\ell\geq1}\epsilon^{\ell}<\infty$

\li \For $\ell\gets1,2,\ldots$ \li \Do

       Pick $P^{\ell}$ such that $-A(P^{\ell})v^{\ell-1}+b(P^{\ell})+\epsilon^{\ell}\geq\sup_{P\in\mathcal{P}}\{-A(P)v^{\ell-1}+b(P)\}$

\li    $v^{\ell}\coloneqq\proc{Solve}(A(P^{\ell}),b(P^{\ell}),v^{\ell-1})$

    \End

\end{codebox}

The following appears in \cite{bokanowski2009some}:
\begin{lem}
\label{lem:nearly_monotone_convergence}A bounded sequence $(v^{\ell})_{\ell\geq0}$
in $\mathbb{R}$ converges if there exists a positive sequence $(\epsilon^{\ell})_{\ell\geq1}$
in $\mathbb{R}$ such that $\sum_{\ell\geq1}\epsilon^{\ell}<\infty$
and $v^{\ell}-v^{\ell-1}\geq-\epsilon^{\ell}$ for $\ell\geq1$.
\end{lem}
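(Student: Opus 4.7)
The plan is to reduce to the monotone convergence theorem by correcting $(v^{\ell})$ with a summable perturbation so that it becomes monotone. Specifically, I would define the auxiliary sequence
\[
w^{\ell} \coloneqq v^{\ell} + \sum_{k=1}^{\ell}\epsilon^{k}, \qquad \ell \geq 0,
\]
with the usual convention that the empty sum is zero, so $w^{0}=v^{0}$. The hypothesis $v^{\ell}-v^{\ell-1}\geq -\epsilon^{\ell}$ immediately gives
\[
w^{\ell} - w^{\ell-1} = \bigl(v^{\ell}-v^{\ell-1}\bigr) + \epsilon^{\ell} \geq 0,
\]
so $(w^{\ell})_{\ell\geq 0}$ is nondecreasing.

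Next I would check boundedness of $(w^{\ell})_{\ell\geq 0}$. Since $(v^{\ell})$ is bounded by assumption and the tail $\sum_{k=1}^{\ell}\epsilon^{k}$ is bounded above by the convergent series $S\coloneqq\sum_{k\geq 1}\epsilon^{k}<\infty$, the sequence $(w^{\ell})$ is bounded above (and bounded below by $v^{0}$ together with the nonnegativity of each $\epsilon^{k}$, though only the upper bound is needed). The monotone convergence theorem for real sequences then yields a limit $w^{\ell}\to w^{\star}\in\mathbb{R}$.

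Finally, I would recover convergence of $(v^{\ell})$ by writing
\[
v^{\ell} = w^{\ell} - \sum_{k=1}^{\ell}\epsilon^{k}.
\]
The first term converges to $w^{\star}$, and the second term converges to $S$ by absolute convergence of the series, so $v^{\ell}\to w^{\star}-S$. No step here is delicate; the only point that might be worth emphasizing is that the construction of $w^{\ell}$ is exactly what absorbs the ``one-sided slack'' in the hypothesis $v^{\ell}-v^{\ell-1}\geq -\epsilon^{\ell}$, turning an approximately-monotone sequence into a genuinely monotone one whose perturbation happens to converge.
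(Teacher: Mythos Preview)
Your argument is correct and is exactly the standard trick: adding the summable slack $\sum_{k=1}^{\ell}\epsilon^{k}$ produces a genuinely nondecreasing, bounded sequence, and subtracting off the convergent tail recovers a limit for $(v^{\ell})$. The paper does not actually supply its own proof of this lemma; it simply attributes the result to \cite{bokanowski2009some}, so there is nothing to compare against beyond noting that your proof is the expected one.
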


We require the following lemma, whose proof is trivial and thus omitted:
\begin{lem}
\label{lem:marginalized_supremum} Let $X$ be a set, $Y$ a normed
linear space, $T\colon X\times Y\rightarrow\mathbb{R}$, and $Q\colon X\rightarrow\mathbb{R}$
with $Q$ bounded above. Suppose that for each $x$ in $X$, $T_{x}\colon Y\rightarrow\mathbb{R}$
defined by $T_{x}(y)\coloneqq T(x,y)$ is linear and that $T_{x}$
has operator norm bounded uniformly with respect to $x$. The map
$y\mapsto\sup_{x\in X}\{T(x,y)+Q(x)\}$ is uniformly continuous.
\end{lem}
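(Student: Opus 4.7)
The plan is to establish that $F(y) \coloneqq \sup_{x\in X}\{T(x,y)+Q(x)\}$ is in fact Lipschitz, which immediately implies uniform continuity. Let $C$ be a uniform bound on the operator norms of $\{T_x\}_{x\in X}$, and let $\overline{Q}$ be an upper bound for $Q$.

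First I would verify that $F(y)$ is finite (and hence a genuine real-valued map) for each $y$. By linearity of $T_x$, the operator norm bound gives $T(x,y)=T_x(y)\leq C\|y\|$, so $T(x,y)+Q(x)\leq C\|y\|+\overline{Q}$, and the supremum is bounded above. (Finiteness from below is immaterial for the argument; one could also take $F$ to be extended-real valued and the same inequalities go through.)

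Next I would exploit linearity to compare $F$ at two points $y_1,y_2\in Y$. For every $x\in X$,
\[
T(x,y_1)+Q(x)=T_x(y_1-y_2)+T(x,y_2)+Q(x)\leq C\|y_1-y_2\|+T(x,y_2)+Q(x).
\]
Taking the supremum over $x$ on both sides yields $F(y_1)\leq F(y_2)+C\|y_1-y_2\|$. Swapping the roles of $y_1$ and $y_2$ gives the reverse inequality, hence $|F(y_1)-F(y_2)|\leq C\|y_1-y_2\|$, which is Lipschitz (and therefore uniformly) continuous.

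There is no real obstacle here; the only subtlety is recognizing that the uniform operator norm bound is exactly what decouples the comparison of $T(\cdot,y_1)$ and $T(\cdot,y_2)$ from the choice of $x$, so that the $Q(x)$ term cancels pointwise before taking the supremum. This is presumably why the authors deem the proof trivial and omit it.
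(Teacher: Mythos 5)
Your proof is correct. The paper explicitly omits the proof of this lemma as ``trivial,'' so there is no authorial argument to compare against; the Lipschitz estimate you give, exploiting linearity of $T_x$ to write $T(x,y_1)=T_x(y_1-y_2)+T(x,y_2)$ and then using the uniform operator-norm bound to decouple the increment from $x$, is precisely the standard argument one would expect the authors had in mind, and your parenthetical remark about extended-real values correctly addresses the only minor subtlety (that $F$ could a priori be $-\infty$ somewhere, in which case the same inequality forces it to be $-\infty$ everywhere, and continuity still holds).
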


\begin{thm}
 \label{thm:convergence_of_sequential_policy_iteration} Suppose
\ref{ass:inverse_is_bounded}, \assweaker, and that $A(P)$ is a
monotone matrix for all $P$ in $\mathcal{P}$. $(v^{\ell})_{\ell\geq1}$
defined by \proc{$\epsilon$-Policy-Iteration} converges to the unique
solution $v$ of \eqref{eq:bellman_problem}.
\end{thm}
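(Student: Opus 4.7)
The plan is to combine four ingredients: well-posedness of each iterate via monotonicity, a quasi-monotonicity estimate driving convergence through \prettyref{lem:nearly_monotone_convergence}, passage to the limit via continuity from \prettyref{lem:marginalized_supremum}, and a separate uniqueness argument analogous to that of \prettyref{thm:uniqueness}.

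\emph{Well-posedness and boundedness.} Monotonicity of $A(P)$ implies $A(P)$ is nonsingular with $A(P)^{-1} \geq 0$ componentwise (since $A(P)^{-1}e_{j} \geq 0$ follows from $A(P)(A(P)^{-1}e_{j}) = e_{j} \geq 0$). Hence $v^{\ell} = A(P^{\ell})^{-1}b(P^{\ell})$ is well-defined, and by \ref{ass:inverse_is_bounded} together with boundedness of $b$ from \assweaker, $(v^{\ell})_{\ell \geq 0}$ is uniformly bounded.

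\emph{Quasi-monotone convergence.} The approximate-argmax property and $-A(P^{\ell-1})v^{\ell-1}+b(P^{\ell-1})=0$ give
\[
-A(P^{\ell})v^{\ell-1}+b(P^{\ell})+\epsilon^{\ell}\mathbf{1}\geq\sup_{P}\{-A(P)v^{\ell-1}+b(P)\}\geq 0.
\]
Combined with $-A(P^{\ell})v^{\ell}+b(P^{\ell})=0$, this yields $A(P^{\ell})(v^{\ell}-v^{\ell-1}) \geq -\epsilon^{\ell}\mathbf{1}$. Applying the nonnegative matrix $A(P^{\ell})^{-1}$ and invoking \ref{ass:inverse_is_bounded} produces a constant $c>0$ independent of $\ell$ with $v^{\ell}-v^{\ell-1}\geq -c\epsilon^{\ell}\mathbf{1}$. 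Since $(c\epsilon^{\ell})$ remains summable, \prettyref{lem:nearly_monotone_convergence} applied componentwise gives convergence of $(v^{\ell})$ to some $v\in\mathbb{R}^{M}$.

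\emph{Passage to the limit.} Set $G(x)\coloneqq\sup_{P\in\mathcal{P}}\{-A(P)x+b(P)\}$. By \prettyref{lem:marginalized_supremum} with $T(P,x)\coloneqq-A(P)x$ (linear in $x$ with operator norm uniformly bounded by \assweaker) and $Q(P)\coloneqq b(P)$ (bounded above by \assweaker), $G$ is uniformly continuous componentwise. The algorithm enforces $0 \leq G(v^{\ell-1}) - (-A(P^{\ell})v^{\ell-1}+b(P^{\ell})) \leq \epsilon^{\ell}\mathbf{1}$. Using $-A(P^{\ell})v^{\ell}+b(P^{\ell})=0$, uniform boundedness of $A(P^{\ell})$, and $v^{\ell-1}-v^{\ell}\to 0$, the middle term tends to zero, so $G(v^{\ell-1})\to 0$; continuity of $G$ and $v^{\ell-1}\to v$ then force $G(v)=0$.

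\emph{Uniqueness.} Let $x,y$ both solve \eqref{eq:bellman_problem}. Choose $(\tilde P^{\ell})\subset\mathcal{P}$ with $-A(\tilde P^{\ell})y+b(\tilde P^{\ell})\to G(y)=0$, so that $-A(\tilde P^{\ell})y+b(\tilde P^{\ell})+\delta^{\ell}\mathbf{1}\geq 0\geq -A(\tilde P^{\ell})x+b(\tilde P^{\ell})$ for some $\delta^{\ell}\downarrow 0$. Thus $A(\tilde P^{\ell})(x-y)\geq -\delta^{\ell}\mathbf{1}$, and monotonicity together with \ref{ass:inverse_is_bounded} gives $x-y\geq -c\delta^{\ell}\mathbf{1}$; sending $\ell\to\infty$ and symmetrizing in $x,y$ yields $x=y$.

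The main obstacle is the bookkeeping around the scalar sequence $(\epsilon^{\ell})$: after passing through $A(P^{\ell})^{-1}$ the componentwise defect is bounded by $\epsilon^{\ell}\|A(P^{\ell})^{-1}\mathbf{1}\|_{\infty}$, and one must use \ref{ass:inverse_is_bounded} precisely to keep this summable so that \prettyref{lem:nearly_monotone_convergence} applies. A second subtle point is that, without \assargsup, we cannot freeze an argmax, so the final step $G(v^{\ell-1})\to G(v)$ really does require uniform continuity of the $\sup$-map via \prettyref{lem:marginalized_supremum}; this is where the hypothesis \assweaker is actually used (as opposed to being subsumed by \assargsup).
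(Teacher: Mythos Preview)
Your argument is correct and follows the paper's proof essentially line for line: the same quasi-monotonicity estimate $A(P^{\ell})(v^{\ell}-v^{\ell-1})\geq-\epsilon^{\ell}\mathbf{1}$ feeding into \prettyref{lem:nearly_monotone_convergence}, the same passage to the limit via \prettyref{lem:marginalized_supremum}, and the same uniqueness argument. The only cosmetic slip is that the inequality $\sup_{P}\{-A(P)v^{\ell-1}+b(P)\}\geq -A(P^{\ell-1})v^{\ell-1}+b(P^{\ell-1})=0$ requires $\ell\geq 2$ (there is no $P^{0}$), so the quasi-monotonicity estimate should be stated for $\ell>1$; this does not affect the conclusion since \prettyref{lem:nearly_monotone_convergence} is insensitive to finitely many terms, and you have already established uniform boundedness directly from $v^{\ell}=A(P^{\ell})^{-1}b(P^{\ell})$.
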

\begin{proof}
First, note that
\begin{equation}
A(P^{\ell})\left(v^{\ell}-v^{\ell-1}\right)=-A(P^{\ell})v^{\ell-1}+b(P^{\ell})\geq\sup_{P\in\mathcal{P}}\left\{ -A(P)v^{\ell-1}+b(P)\right\} -\epsilon^{\ell}.\label{eq:policy_iteration_difference}
\end{equation}
For $\ell>1$,
\[
\sup_{P\in\mathcal{P}}\left\{ -A(P)v^{\ell-1}+b(P)\right\} \geq-A(P^{\ell-1})v^{\ell-1}+b(P^{\ell-1})=0.
\]
Combining this with \eqref{eq:policy_iteration_difference},
\[
v^{\ell}-v^{\ell-1}\geq-A(P^{\ell})^{-1}(\epsilon^{\ell},\ldots,\epsilon^{\ell})^{\intercal}\geq-C\epsilon^{\ell}\text{ for some }C\geq0.
\]
The last inequality follows from the boundedness of $P\mapsto A(P)^{-1}$
in \ref{ass:inverse_is_bounded}. By \prettyref{lem:nearly_monotone_convergence},
$v^{\ell}\rightarrow v$ for some $v$ in $\mathbb{R}^{M}$. Taking
limits in \eqref{eq:policy_iteration_difference} and applying \prettyref{lem:marginalized_supremum},
\[
0=\lim_{\ell\rightarrow\infty}\left(\sup_{P\in\mathcal{P}}\left\{ -A(P)v^{\ell-1}+b(P)\right\} \right)=\sup_{P\in\mathcal{P}}\left\{ -A(P)v+b(P)\right\} .
\]
Hence, $v$ is a solution to \eqref{eq:bellman_problem}. Uniqueness
is proven similarly to \prettyref{thm:uniqueness}.
\end{proof}

\section{Proof of \prettyref{lem:invariance}\label{app:proof_of_invariance}}
\begin{proof}
We write $A_{\delta}$ and $b_{\delta}$ to stress dependence on $\delta$.
Let $v$ be a solution of \eqref{eq:problem_combined} with $\delta=1$.
A pigeonhole principle argument allows us to pick a sequence $(P^{\ell})_{\ell\geq0}\coloneqq(w^{\ell},z^{\ell},\psi^{\ell})_{\ell\geq0}$
in $\mathcal{P}$ such that $\psi^{\ell}=\psi$ is constant and $-A_{1}(P^{\ell})v+b_{1}(P^{\ell})\rightarrow0$.
Multiplying both sides by $I-\Psi+\delta_{0}\Psi$ (where $\Psi\coloneqq\operatorname{diag}(\psi)$)
yields $-A_{\delta_{0}}(P^{\ell})v+b_{\delta_{0}}(P^{\ell})\rightarrow0$,
and hence $\sup_{P\in\mathcal{P}}\{-A_{\delta_{0}}(P)v+b_{\delta_{0}}(P)\}\geq0$.
Supposing that this inequality is strict, it follows that for some
$P$ and $i$, $[-A_{\delta_{0}}(P)v+b_{\delta_{0}}(P)]_{i}>0$. Multiplying
both sides by $[I-\Psi+\delta_{0}^{-1}\Psi]_{i}$ yields $[-A_{\delta=1}(P)v+b_{\delta=1}(P)]_{i}>0$,
contradicting that $v$ is a solution. The converse is handled similarly.
\end{proof}

\bibliographystyle{plain}
\bibliography{weakly_chained_matrices_policy_iteration_and_impulse_control}

\end{document}